\newtheorem{theorem}{Theorem}[section]
\newtheorem{lemma}[theorem]{Lemma}
\newtheorem{prop}[theorem]{Proposition}
\newtheorem{remark}{Remark}[section]
\newtheorem{corollary}[theorem]{Corollary}
\newtheorem{claim}{Claim}[section]
\newtheorem{example}{Example}[section]
\newenvironment{proof-sketch}{\noindent{\bf Sketch of Proof}\hspace*{1em}}{\qed\bigskip}
\newcommand{\RR}{\mathbb R}
\newcommand{\NN}{\mathbb N}
\newcommand{\ZZ}{\mathbb Z}
\renewcommand{\leq}{\leqslant}
\renewcommand{\geq}{\geqslant}
\date{\today}
\begin{document}
\title[Robin problems with indefinite potential and general reaction]{Nonlinear, nonhomogeneous Robin problems with\\ indefinite potential and general reaction}
\author[N.S. Papageorgiou]{N.S. Papageorgiou}
\address[N.S. Papageorgiou]{ Department of Mathematics, National Technical University,
				Zografou Campus, Athens 15780, Greece \& Institute of Mathematics, Physics and Mechanics, Jadranska 19, 1000 Ljubljana, Slovenia}
\email{\tt npapg@math.ntua.gr}
\author[V.D. R\u{a}dulescu]{V.D. R\u{a}dulescu}
\address[V.D. R\u{a}dulescu]{Institute of Mathematics, Physics and Mechanics, Jadranska 19, 1000 Ljubljana, Slovenia \& Faculty of Applied Mathematics, AGH University of Science and Technology, al. Mickiewicza 30, 30-059 Krak\'ow, Poland \& Institute of Mathematics ``Simion Stoilow" of the Romanian Academy, P.O. Box 1-764, 014700 Bucharest, Romania}
\email{\tt vicentiu.radulescu@imfm.si}
\author[D.D. Repov\v{s}]{D.D. Repov\v{s}}
\address[D.D. Repov\v{s}]{Faculty of Education and Faculty of Mathematics and Physics, University of Ljubljana \& Institute of Mathematics, Physics and Mechanics, Jadranska 19, 1000 Ljubljana, Slovenia}
\email{\tt dusan.repovs@guest.arnes.si}
\keywords{Nonhomogeneous differential operator, nonlinear regularity theory, constant sign and nodal solutions, infinitely many nodal solutions, critical groups.\\
\phantom{aa} 2010 Mathematics Subject Classification. Primary: 35J20. Secondary: 35J60, 58E05}
\begin{abstract}
We consider a nonlinear elliptic equation driven by a nonhomogeneous differential operator plus an indefinite potential. On the reaction term we impose conditions only near zero. Using variational methods, together with truncation and perturbation techniques and critical groups, we produce three nontrivial solutions with sign information. In the semilinear case we improve this result by obtaining a second nodal solution for a total of four nontrivial solutions. Finally, under a symmetry condition on the reaction term, we generate a whole sequence of distinct nodal solutions.
\end{abstract}
\maketitle

\section{Introduction}

Let $\Omega\subseteq\RR^N$ be a bounded domain with a $C^2$-boundary $\partial\Omega$. In this paper we study the following nonlinear nonhomogeneous Robin problem
\begin{equation}\label{eq1}
	\left\{\begin{array}{ll}
		- {\rm div}\, a(Du(z)) + \xi(z)\arrowvert u(z)\arrowvert^{p-2}u(z)= f(z,u(z))\quad \mbox{in}\ \Omega,\\
		\frac{\partial u}{\partial n_{a}} + \beta(z)|u|^{p-2}u=0\ \mbox{on}\quad \partial\Omega\,.
		\end{array}\right\}
\end{equation}
In this problem, the map $a:\RR^{\NN}\rightarrow\RR^{\NN}$ involved in the differential operator is a continuous, strictly monotone (thus maximal monotone operator, too) map which satisfies certain other regularity and growth conditions listed in hypotheses $H(a)$ below. These conditions are general enough to generate a broad framework that incorporates many differential operators of interest, such as the $p$-Laplacian and the $(p,q)$-Laplacian (that is, the sum of a $p$-Laplacian and a $q$-Laplacian, with $1<q<p<\infty$). Note that in general, the differential operator $u\mapsto {\rm div}\, a(Du)$ is not homogeneous. The potential function $\xi(\cdot)\in L^{\infty}(\Omega)$ and in general, $\xi(\cdot)$ is  nodal (that is, sign changing). So, the left-hand side of problem (\ref{eq1}) needs not be coercive. The reaction term $f(z,x)$ is a Carath\'eodory function (that is, for all $x\in\RR$, the mapping $ z\mapsto f(z,x)$ is measurable, while for almost all $z\in\Omega$, the mapping $ x\mapsto f(z,x)$ is continuous). The special feature of our paper  is that no global growth condition is imposed on $f(z,\cdot)$. The only conditions imposed on $f(z,\cdot)$ concern its behavior near zero and that $f(z,\cdot)$ must be locally $L^{\infty}$-bounded. In the boundary condition, $\frac{\partial u}{\partial n_{a}}$ denotes the generalized normal derivative corresponding to the map $a(\cdot)$. It is defined by extension of the map
$$C^{1}(\overline{\Omega})\ni u \mapsto \frac{\partial u}{\partial n_{a}} = (a(Du), n)_{\RR_\NN},$$
with $n(\cdot)$ being the outward unit normal on $\partial\Omega$. This kind of conormal derivative is dictated by the nonlinear Green identity (see Gasinski and Papageorgiou \cite[p. 210]{6}) and was also used  by Lieberman \cite{11} in his nonlinear regularity theory. The boundary coefficient is $\beta\in C^{0,\alpha}(\partial\Omega)$, with $0<\alpha<1$ and $\beta(z)\geq0$ for all $z\in\partial\Omega$.

The aim of the present paper is to prove a multiplicity theorem for such equations, providing sign information for all solutions produced. Wang \cite{28} was the first to study elliptic problems with a general reaction term of arbitrary growth. The equation studied by Wang \cite{28} was a nonlinear problem driven by Dirichlet $p$-Laplacian with zero potential. Using cut-off techniques and imposing a symmetry condition on $f(z,\cdot)$ (that is, assuming that $f(z,\cdot)$ is odd), Wang \cite{28} produced an infinity of nontrivial solutions. More recently, Li and Wang \cite{12}, using similar tools, improved this result by producing an infinity of nodal solutions for semilinear Schr\"{o}dinger equations. Their result was extended by Papageorgiou and R\u{a}dulescu \cite{18} who considered nonlinear, nonhomogeneous Robin problems with zero potential (that is, $\xi\equiv0$). Assuming that the reaction term $f(z,\cdot)$ has zeros of constant sign and that it is odd, they produced an infinity of  smooth nodal solutions. We also mention the recent work of Papageorgiou and Winkert \cite{22}, who considered a reaction term of general growth and with zeros. Under stronger conditions on the map $a(\cdot)$ and with zero potential, they produced constant sign and nodal solutions.
 We refer to  Pucci {\it at al.} \cite{patri1, patri2} for eigenvalue problems associated to $p$-Laplacian type operators.
Related results in the framework of problems with unbalanced growth are due to Fiscella and Pucci \cite{patri3}, and Papageorgiou, R\u{a}dulescu and Repov\v{s} \cite{prrana18}.
Finally, we also point out the papers of He, Yao and Sun \cite{8} on nonlinear, nonhomogeneous Neumann problems with nonnegative potential (that is, $\xi\geq0$, $\xi\not\equiv0$), Iturriaga, Massa, Sanchez and Ubilla \cite{9} on parametric equations driven by Dirichlet $p$-Laplacian with zero potential and a reaction with zeros, and  Tan and Fang \cite{26} on nonlinear, nonhomogeneous Dirichlet problems using the formalism of Orlicz spaces.

\section{Mathematical Background}

Let $X$ be a Banach space and $X^{*}$ its topological dual. By $\left\langle \cdot,\cdot\right\rangle$ we denote the duality brackets for the pair $(X^{*},X)$. Given $\varphi\in C^{1}(X,\RR)$, we say that $\varphi$ satisfies the ``Cerami condition" (the C-condition for short), if the following property holds:
\begin{center}
``Every sequence $\{u_n\}_{n\geq1}\subseteq X$ such that $\{\varphi(u_n)\}_{n\geq1}\subseteq\RR$ is bounded and
$$(1+||u_n||)\varphi'(u_n)\rightarrow0\ \mbox{in}\ X^{*}\ \mbox{as}\ n\rightarrow\infty,$$
admits a strongly convergent subsequence".
\end{center}

This compactness-type condition on the functional $\varphi$, leads to a deformation theorem from which one can derive the minimax theory for the critical values of $\varphi$. A fundamental result in this theory is the so-called ``mountain pass theorem".

\begin{theorem}\label{th1}
	Let $X$ be a Banach space and assume that $\varphi\in C^{1}(X,\RR)$ satisfies the C-condition, $u_0, u_1\in X$, $||u_1-u_0||>\rho>0$,
	$$\max\{\varphi(u_0),\varphi(u_1)\}\ <\ \inf\left\{\varphi(u):||u-u_0||=\rho\right\} = m_\rho$$
	and
	$$c = \inf_{\gamma\in\Gamma}\max_{0\leq t\leq1}\varphi(\gamma(t))\ \mbox{with}\ \Gamma=\{\gamma\in C([0,1],X):\gamma(0)=u_0,\gamma(1)=u_1\}.$$
	Then $c\geq m_\rho$ and $c$ is a critical value of $\varphi$.
\end{theorem}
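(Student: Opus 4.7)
The plan is to establish the two conclusions independently. The inequality $c \geq m_\rho$ will follow from a connectedness observation on paths in $\Gamma$, while the criticality of $c$ will be obtained by a standard contradiction argument based on a deformation lemma adapted to the Cerami condition.

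For the lower bound, I would fix an arbitrary $\gamma \in \Gamma$ and consider the continuous function $t \mapsto \|\gamma(t) - u_0\|$ on $[0,1]$: it vanishes at $t = 0$ and at $t = 1$ equals $\|u_1 - u_0\| > \rho$. The intermediate value theorem therefore produces $t_\gamma \in (0,1)$ with $\|\gamma(t_\gamma) - u_0\| = \rho$, so that $\max_{t \in [0,1]} \varphi(\gamma(t)) \geq \varphi(\gamma(t_\gamma)) \geq m_\rho$; taking infimum over $\Gamma$ gives $c \geq m_\rho$.

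For the criticality, I would argue by contradiction, assuming that the critical set $K_c := \{u \in X : \varphi'(u) = 0,\ \varphi(u) = c\}$ is empty. Combined with the C-condition, this would let me invoke the Cerami-type deformation lemma: there exist $\bar\varepsilon > 0$, $\varepsilon \in (0,\bar\varepsilon)$ and a continuous map $\eta \colon [0,1] \times X \to X$ with $\eta(0, \cdot) = \mathrm{id}_X$, with $\eta(s,u) = u$ whenever $|\varphi(u) - c| \geq \bar\varepsilon$, and with $\eta(1, \varphi^{c+\varepsilon}) \subseteq \varphi^{c-\varepsilon}$, where $\varphi^a := \{u \in X : \varphi(u) \leq a\}$. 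Shrinking $\bar\varepsilon$ if necessary, I can arrange $c - \bar\varepsilon > \max\{\varphi(u_0), \varphi(u_1)\}$, which is possible because $c \geq m_\rho > \max\{\varphi(u_0), \varphi(u_1)\}$. Now choose $\gamma_0 \in \Gamma$ with $\max_t \varphi(\gamma_0(t)) < c + \varepsilon$ and set $\widetilde\gamma(t) := \eta(1, \gamma_0(t))$. Since the endpoints $u_0, u_1$ lie in $\{\varphi < c - \bar\varepsilon\}$, the deformation fixes them, so $\widetilde\gamma \in \Gamma$; but $\max_t \varphi(\widetilde\gamma(t)) \leq c - \varepsilon < c$, contradicting the definition of $c$. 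Hence $K_c \neq \emptyset$, i.e.\ $c$ is critical.

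The main obstacle is the Cerami-type deformation lemma itself. Unlike the Palais-Smale case, one has to weight a pseudo-gradient vector field by $(1 + \|u\|)^{-1}$ in order to produce a well-defined flow on all of $X$ and to ensure that the descent along the flow crosses a non-critical level in finite time despite the weaker compactness afforded by the C-condition. This construction is by now classical, so I would simply quote it (for instance from Gasinski and Papageorgiou \cite{6}); the remainder of the proof is elementary geometry of paths in $\Gamma$ combined with the level-set pushdown afforded by $\eta$.
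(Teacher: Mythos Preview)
Your argument is correct and follows the standard route to the mountain pass theorem under the Cerami condition: the connectedness argument for $c \geq m_\rho$ is exactly right, and the contradiction via the deformation lemma is the canonical approach. The only minor point worth flagging is that you should be explicit that the deformation lemma you invoke holds under the C-condition (not just Palais--Smale), but you do address this and correctly cite Gasinski and Papageorgiou \cite{6} for it.

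However, note that the paper does \emph{not} actually prove Theorem~\ref{th1}. It is stated in the preliminary Section~2 (Mathematical Background) as a known foundational result of minimax theory, introduced with the sentence ``A fundamental result in this theory is the so-called `mountain pass theorem','' and no proof is given. The theorem is then used as a black box later in the paper (e.g., in the proof of Proposition~\ref{prop10}). So there is no ``paper's own proof'' to compare against; your proposal supplies a proof where the authors simply quote the result.
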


Let $k\in C^{1}(0,\infty)$ with $k(t)>0$ for all $t>0$. We assume that
\begin{equation}\label{eq2}
	0<\hat{c}\leq\frac{k'(t)t}{k(t)}\leq c_{0}\ \mbox{and}\ c_{1}t^{p-1}\leq k(t)\leq c_{2}(1+t^{p-1})\ \mbox{for all}\ t>0\ \mbox{, with}\ c_1,c_2>0.
\end{equation}

We introduce the following conditions on the map $\alpha(\cdot)$ (see also Papageorgiou and R\u{a}dulescu \cite{17, 19}):

\smallskip
$H(a):$ $a(y)=a_0(|y|)y$ for all $y\in\RR$ with $a_0(t)>0$ for all $t>0$ and
\begin{itemize}
	\item [(i)] $a_0\in C^1(0,\infty), t\mapsto a_0(t)t$ is strictly increasing on $(0, \infty), a_0(t)t\rightarrow0^+$ as $t\rightarrow0^+$ and
		$$\lim_{t\rightarrow0^+}\frac{a'_0(t)t}{a_0(t)}>-1;$$
	\item [(ii)] $|\nabla a(y)|\leq c_3\frac{k(|y|)}{|y|}$ for all $y\in\RR^\NN\backslash\{0\}$, and for some $c_3>0$;
	\item [(iii)] $(\nabla a(y)\xi,\xi)_{\RR^\NN}\geq\frac{k(|y|)}{|y|}|\xi|^2$ for all $y\in\RR^\NN\backslash\{0\}$, $\xi\in\RR^\NN$;
	\item [(iv)] for $G_0(t)=\int^t_0 a_0(s)s\ ds$ for all $t>0$, we can find $\tau\in(1,p]$ such that
		$$\limsup_{t\rightarrow0^+}\frac{\tau G_0(t)}{t^\tau}\leq c^*,$$
		$$t\mapsto G_0(t^{\frac{1}{\tau}})\ \mbox{is convex.}$$
\end{itemize}

\begin{remark}
Hypotheses $H(a)(i), (ii), (iii)$ were dictated by the nonlinear regularity theory of Lieberman \cite{11} and  the nonlinear maximum principle of Pucci and Serrin \cite[pp. 111, 120]{23}. Hypothesis $H(a)(iv)$ serves the needs of our problem. It is a mild condition and it is satisfied in all cases of interest (see the examples below). Evidently, $G_0(\cdot)$ is strictly convex and strictly increasing. We set $G(y)=G_0(|y|)$ for all $y\in\RR^N$. Then $G\in C^1(\RR^N,\RR),\ G(\cdot)$ is convex, $G(0)=0$, and we have
$$\nabla G(0)=0\ \mbox{and}\ \nabla G(y) = G'_0(|y|)\frac{y}{|y|} = a_0(|y|)y = a(y)\ \mbox{for all}\ y\in\RR^\NN\backslash\{0\}.$$
\end{remark}

Hence $G(\cdot)$ is the primitive of $a(\cdot)$ and so by a well-known property of convex functions, we have
\begin{equation}\label{eq3}
	G(y)\leq(a(y),y)_{\RR^\NN}\ \mbox{for all}\ y\in\RR^\NN.
\end{equation}

The following lemma is an easy consequence of hypotheses $H(a)$ and summarizes the main properties of $a(\cdot)$ (see Papageorgiou and R\u{a}dulescu \cite{17}).

\begin{lemma}\label{lem2}
	If hypotheses $H(a)(i), (ii), (iii)$ hold, then
	\begin{itemize}
		\item [(a)] $y\mapsto a(y)$ is continuous and strictly monotone (thus maximal monotone operator, too);
		\item [(b)] $|a(y)|\leq  c_4(1+|y|^{p-1})$ for all $y\in\RR^\NN$, with $c_4>0$;
		\item [(c)] $(a(y),y)_{\RR^\NN}\geq\frac{c_1}{p-1}|y|^p$ for all $y\in\RR^\NN$ (see (\ref{eq2})).
	\end{itemize}
\end{lemma}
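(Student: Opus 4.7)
The plan is to reduce all three assertions to scalar estimates on the auxiliary function $\psi(t):=a_0(t)t$, via the identities $|a(y)|=\psi(|y|)$ and $(a(y),y)_{\RR^{\NN}}=\psi(|y|)|y|$. For $y\neq 0$ a direct computation yields
$$\nabla a(y) = a_0'(|y|)\frac{y\otimes y}{|y|} + a_0(|y|)\,I,$$
a symmetric matrix whose eigenvalues are $\psi'(|y|)$ along $y/|y|$ and $a_0(|y|)$ on the orthogonal complement. Applying $H(a)(ii)$ and $H(a)(iii)$ in the quadratic form with $\xi = y/|y|$ yields the sandwich $k(t)/t \leq \psi'(t)\leq c_3 k(t)/t$, and with $\xi$ a unit vector orthogonal to $y$ (using $N\geq 2$) yields $k(t)/t\leq a_0(t)\leq c_3 k(t)/t$, whence $k(t)\leq \psi(t)\leq c_3 k(t)$.

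For (a), continuity of $a$ on $\RR^{\NN}\setminus\{0\}$ is immediate from $a_0\in C^1(0,\infty)$, while continuity at the origin follows from $|a(y)|=\psi(|y|)\to 0^+$ by $H(a)(i)$. For strict monotonicity I would use the line-integral representation
$$(a(y)-a(y'),y-y')_{\RR^{\NN}} = \int_0^1 (\nabla a(y_t)(y-y'), y-y')_{\RR^{\NN}}\,dt$$
with $y_t=y'+t(y-y')$; hypothesis $H(a)(iii)$ bounds the integrand below by $(k(|y_t|)/|y_t|)|y-y'|^2>0$ for a.e.\ $t\in[0,1]$, so the integral is strictly positive whenever $y\neq y'$ (the single value of $t$ where the segment could meet the origin being of measure zero).

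For (b), the sandwich gives $|a(y)|=\psi(|y|)\leq c_3 k(|y|)\leq c_3c_2(1+|y|^{p-1})$ by the upper bound in (\ref{eq2}), so one may take $c_4:=c_3c_2$. For (c), I would integrate the lower bound $\psi'(t)\geq k(t)/t\geq c_1 t^{p-2}$ on $(0,|y|)$, using $\psi(0^+)=0$ from $H(a)(i)$, to obtain $\psi(|y|)\geq (c_1/(p-1))|y|^{p-1}$; hence $(a(y),y)_{\RR^{\NN}}=\psi(|y|)|y|\geq (c_1/(p-1))|y|^p$.

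The step I expect to require the most care is the derivation of the scalar sandwich for $\psi'$ and $a_0$ from $H(a)(ii),(iii)$: one must verify that testing against $\xi=y/|y|$ and against $\xi\perp y$ does extract two-sided control, which depends on the explicit spectral decomposition of $\nabla a$ described above and the coherence between the operator-norm bound in (ii) and the quadratic-form bound in (iii). Once this reduction is in place, the remaining arguments are short manipulations or one-line integrations.
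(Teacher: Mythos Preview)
The paper does not prove this lemma: it states that the result is ``an easy consequence of hypotheses $H(a)$'' and refers the reader to Papageorgiou and R\u{a}dulescu \cite{17}. Your argument is a valid, self-contained proof. The spectral decomposition of $\nabla a(y)$ into the radial eigenvalue $\psi'(|y|)=a_0'(|y|)|y|+a_0(|y|)$ and the tangential eigenvalue $a_0(|y|)$ is exactly the right reduction, and your derivations of (a)--(c) from (\ref{eq2}) are correct.

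Two remarks on points you already anticipate. The bound $a_0(t)\leq c_3k(t)/t$ (hence $\psi(t)\leq c_3k(t)$), obtained by testing against a unit $\xi\perp y$, indeed requires $N\geq 2$; for (b) in dimension $N=1$ one can instead integrate $\psi'(t)\leq c_3k(t)/t\leq c_3c_2(t^{-1}+t^{p-2})$ from $1$ to $t$ and use continuity of $\psi$ together with $\psi(0^+)=0$ to handle $t\leq 1$. In the line-integral argument for strict monotonicity, if the segment meets the origin at some $t_0$ the integrand may blow up like $|t-t_0|^{-1}$, so the formula $\int_0^1(\nabla a(y_t)(y-y'),y-y')\,dt=(a(y)-a(y'),y-y')$ is not literally a Lebesgue integral across $t_0$; the cleanest justification is to observe that $t\mapsto (a(y_t),y-y')$ is continuous on $[0,1]$ and $C^1$ with strictly positive derivative on each of $[0,t_0)$ and $(t_0,1]$, hence strictly increasing, which yields $(a(y)-a(y'),y-y')>0$ directly (and shows the improper integral converges to this finite value).
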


	Then this lemma and (\ref{eq3}) lead to the following growth restrictions for the primitive $G(\cdot)$.

\begin{corollary}\label{c3}
	If hypotheses $H(a)(i), (ii), (iii)$ hold, then $\frac{c_1}{p(p-1)}|y|^p\leq G(y)\leq c_5(1+|y|^p)$ for all $y\in\RR^\NN$ and for some $c_5>0$.
\end{corollary}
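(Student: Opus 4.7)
The plan is to work directly from the definition $G(y)=G_0(|y|)=\int_0^{|y|}a_0(s)s\,ds$ and to sandwich the integrand $a_0(t)t$ using the pointwise information already extracted in Lemma~\ref{lem2}. The estimate (\ref{eq3}) will not be invoked for either bound, since it relates $G(y)$ to $(a(y),y)_{\RR^\NN}$ in the wrong direction for the upper bound and produces a weaker constant than a direct integration for the lower one.

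For the lower bound, I would read Lemma~\ref{lem2}(c) as a pointwise statement on $a_0$: since $(a(y),y)_{\RR^\NN}=a_0(|y|)|y|^2$, the estimate rearranges to $a_0(t)t\geq\frac{c_1}{p-1}t^{p-1}$ for all $t>0$. Integrating from $0$ to $|y|$ (which is legitimate because $H(a)(i)$ forces $a_0(s)s\to 0^+$ as $s\to 0^+$, making the integrand integrable near the origin) then yields $G(y)\geq\frac{c_1}{p(p-1)}|y|^p$.

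For the upper bound, I would likewise convert Lemma~\ref{lem2}(b) into $a_0(t)t=|a(y)|\leq c_4(1+t^{p-1})$ at $t=|y|$ and integrate to get $G_0(|y|)\leq c_4|y|+\tfrac{c_4}{p}|y|^p$. The only mild nuisance is absorbing the linear term $c_4|y|$ into the required form $c_5(1+|y|^p)$. I would split into two cases: when $|y|\leq 1$ one has $c_4|y|+\tfrac{c_4}{p}|y|^p\leq c_4(1+\tfrac{1}{p})\leq c_4(1+\tfrac{1}{p})(1+|y|^p)$; when $|y|\geq 1$ one has $c_4|y|\leq c_4|y|^p$, hence $c_4|y|+\tfrac{c_4}{p}|y|^p\leq c_4(1+\tfrac{1}{p})|y|^p\leq c_4(1+\tfrac{1}{p})(1+|y|^p)$. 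Taking $c_5=c_4(1+\tfrac{1}{p})$ closes both regimes.

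The argument is essentially a routine computation, so there is no genuine obstacle; the only item requiring care is the elementary case split used to handle the additive constant in Lemma~\ref{lem2}(b) when passing from $c_4|y|+\tfrac{c_4}{p}|y|^p$ to $c_5(1+|y|^p)$.
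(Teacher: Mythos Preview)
Your argument is correct and amounts to the same computation the paper has in mind; the paper gives no detailed proof, only the remark that ``this lemma and (\ref{eq3}) lead to'' the corollary. One small correction to your commentary, though: inequality (\ref{eq3}) reads $G(y)\leq (a(y),y)_{\RR^\NN}$, which is exactly the \emph{right} direction for the upper bound---combined with Lemma~\ref{lem2}(b) it gives $G(y)\leq c_4|y|+c_4|y|^p$, and that is presumably the route the paper intends. Your direct integration of Lemma~\ref{lem2}(b) achieves the same thing with a marginally sharper constant. For the lower bound you are right that (\ref{eq3}) goes the wrong way, and integrating Lemma~\ref{lem2}(c) is the only option; the paper implicitly does the same.
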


Next, we present some characteristic examples of differential operators which fit in the framework provided by hypotheses $H(a)$ (see Papageorgiou and R\u{a}dulescu \cite{17}).

\begin{example}
\begin{itemize}
	\item [(a)] $a(y)=|y|^{p-2}y$ with $1<p<\infty$.
	
		The corresponding differential operator is the $p$-Laplacian defined by
		$$\Delta_p u = {\rm div}\,(|Du|^{p-2}Du)\ \mbox{for all}\ u\in W^{1,p}(\Omega).$$
	\item [(b)] $a(y)=|y|^{p-2}y+|y|^{q-2}y$ with $1<q<p<\infty$.
	
		The corresponding differential operator is the $(p,q)$-Laplacian defined by
		$$\Delta_p u+\Delta_q u\ \mbox{for all}\ u\in W^{1,p}(\Omega).$$
		
		Such operators arise in problems of mathematical physics and recently there have been some existence and multiplicity results for equations driven by such operators, see Cingolani and Degiovanni \cite{2}, Mugnai and Papageorgiou \cite{14}, Papageorgiou and R\u{a}dulescu \cite{15}, Papageorgiou, R\u{a}dulescu and Repov\v{s} \cite{20}, Sun \cite{24}, and Sun, Zhang and Su \cite{25}.
	\item [(c)] $a(y)=(1+|y|^2)^{\frac{p-2}{p}}y$ with $1<p<\infty$.
	
		The corresponding differential operator is the generalized p-mean curvature differential operator defined by
		$${\rm div}\,([1+|Du|^2]^{\frac{p-2}{p}}Du)\ \mbox{for all}\ u\in W^{1,p}(\Omega).$$
		
	\item [(d)] $a(y)=|y|^{p-2}y\left[1+\frac{1}{1+|y|^p}\right]$ with $1<p<\infty$.
	
		The corresponding differential operator is defined by
		$$\Delta_p u + {\rm div}\,\left(\frac{|Du|^{p-2}Du}{1+|Du|^2}\right)\mbox{for all}\ u\in W^{1,p}(\Omega).$$
		
		Such operators arise in problems of plasticity.
\end{itemize}
\end{example}

Now let $A:W^{1,p}(\Omega)\rightarrow W^{1,p}(\Omega)^*$ be the nonlinear map defined by
$$\langle A(u),h\rangle = \int_\Omega(a(Du), Dh)_{\RR^\NN}dz\ \mbox{for all}\ u,h\in W^{1,p}(\Omega).$$

\begin{prop}\label{prop4}
	If hypotheses $H(a)(i), (ii), (iii)$ hold, then $A$ is continuous, monotone (hence maximal monotone, too) and of type $(S)_+$, that is, if $u_n\xrightarrow{w}u$ in $W^{1,p}(\Omega)$ and
	$$\limsup_{n\rightarrow\infty}\langle A(u_n), u_n-u\rangle\leq 0,$$
	then $u_n\rightarrow u$ in $W^{1,p}(\Omega)$
\end{prop}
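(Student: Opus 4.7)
The plan is to establish the three assertions in turn, with the $(S)_+$ property being the main task. For \emph{continuity}, I would take $u_n\to u$ in $W^{1,p}(\Omega)$, extract a subsequence along which $Du_n\to Du$ almost everywhere, and combine the continuity of $a$, the growth bound $|a(y)|\le c_4(1+|y|^{p-1})$ from Lemma \ref{lem2}(b), and Vitali's convergence theorem to deduce $a(Du_n)\to a(Du)$ in $L^{p'}(\Omega,\RR^\NN)$. H\"older's inequality applied to the duality brackets then yields $\|A(u_n)-A(u)\|_{*}\to 0$, and the usual subsequence principle promotes convergence to the full sequence.

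For \emph{monotonicity}, the pointwise monotonicity of $a$ from Lemma \ref{lem2}(a) makes the integrand in $\langle A(u)-A(v),u-v\rangle=\int_\Omega(a(Du)-a(Dv),Du-Dv)_{\RR^\NN}\,dz$ pointwise nonnegative, so $A$ is monotone. Since $A$ is continuous, hence demicontinuous, the classical fact that every demicontinuous monotone map from a Banach space to its dual is maximal monotone gives the second assertion at no extra cost.

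For the $(S)_+$ \emph{property}, I would start from $u_n\xrightarrow{w}u$ in $W^{1,p}(\Omega)$ with $\limsup_n\langle A(u_n),u_n-u\rangle\le 0$. Weak convergence gives $\langle A(u),u_n-u\rangle\to 0$, and combining this with the inequality $\langle A(u_n)-A(u),u_n-u\rangle\ge 0$ forces $\langle A(u_n)-A(u),u_n-u\rangle\to 0$. Hence $\beta_n(z):=(a(Du_n)-a(Du),Du_n-Du)_{\RR^\NN}\ge 0$ has $\int_\Omega\beta_n\,dz\to 0$, and along a subsequence $\beta_n\to 0$ a.e. A pointwise argument then yields $Du_n(z)\to Du(z)$ a.e.: if along a sub-subsequence $\{y_n\}$ stays bounded with $(a(y_n)-a(y_0),y_n-y_0)\to 0$, strict monotonicity plus continuity of $a$ rule out $y_n\not\to y_0$; if $|y_n|\to\infty$, the coercivity $(a(y),y)\ge\frac{c_1}{p-1}|y|^p$ from Lemma \ref{lem2}(c) together with the growth bound in Lemma \ref{lem2}(b) forces $(a(y_n)-a(y_0),y_n-y_0)\to+\infty$, a contradiction.

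The main obstacle is the final upgrade from a.e.\ convergence of $Du_n$ to strong $L^p$ convergence, where the coercivity in Lemma \ref{lem2}(c) is essential. Since $a(Du)\in L^{p'}$ and $Du_n\xrightarrow{w}Du$ in $L^p$, one has $\int_\Omega(a(Du),Du_n-Du)\,dz\to 0$, so combining with $\int_\Omega(a(Du_n),Du_n-Du)\,dz\to 0$ gives $\int_\Omega(a(Du_n),Du_n)\,dz\to\int_\Omega(a(Du),Du)\,dz$. Since the nonnegative integrands $(a(Du_n),Du_n)$ converge a.e.\ to $(a(Du),Du)$ and their integrals converge, Scheff\'e's lemma yields $L^1$-convergence of these integrands; the estimate $|Du_n|^p\le\frac{p-1}{c_1}(a(Du_n),Du_n)$ then transfers equi-integrability to $\{|Du_n|^p\}$, and Vitali's theorem delivers $Du_n\to Du$ in $L^p(\Omega,\RR^\NN)$. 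Combined with the compact embedding $W^{1,p}(\Omega)\hookrightarrow L^p(\Omega)$ for $u_n\to u$, this gives $u_n\to u$ in $W^{1,p}(\Omega)$ along the subsequence, and a standard subsequence principle extends convergence to the full sequence.
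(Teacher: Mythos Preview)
The paper does not actually supply a proof of this proposition; it is stated as a standard fact about operators of this form and then used throughout. So there is no ``paper's proof'' to compare against, and your argument should be judged on its own.

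Your outline is standard and essentially correct. The continuity via Vitali, the monotonicity from the pointwise strict monotonicity of $a$, and the maximal monotonicity from demicontinuity plus monotonicity are all fine. The pointwise step in the $(S)_+$ argument (ruling out boundedness without convergence via strict monotonicity, and unboundedness via the coercivity bound $(a(y),y)\ge\frac{c_1}{p-1}|y|^p$) is also correct.

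There is, however, one genuine logical gap in the final upgrade. From
\[
\int_\Omega(a(Du_n),Du_n-Du)\,dz\to 0
\quad\text{and}\quad
\int_\Omega(a(Du),Du_n-Du)\,dz\to 0
\]
you \emph{cannot} directly conclude $\int_\Omega(a(Du_n),Du_n)\,dz\to\int_\Omega(a(Du),Du)\,dz$: the two statements only recombine to give $\int_\Omega\beta_n\,dz\to 0$, which you already had. What is missing is the link $\int_\Omega(a(Du_n),Du)\,dz\to\int_\Omega(a(Du),Du)\,dz$. This follows once you observe that $\{a(Du_n)\}$ is bounded in $L^{p'}(\Omega,\RR^\NN)$ by Lemma~\ref{lem2}(b) and converges a.e.\ to $a(Du)$ (from your pointwise step), hence $a(Du_n)\xrightarrow{w}a(Du)$ in $L^{p'}$; testing against $Du\in L^p$ gives the missing convergence. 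With this one line added, your Scheff\'e/Vitali conclusion goes through exactly as written.
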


The following spaces will be used in the analysis of problem (\ref{eq1}):
$$W^{1,p}(\Omega),\ C^{1}(\overline\Omega)\ \mbox{and}\ L^q(\partial\Omega)\ (1\leq q\leq\infty).$$

We denote by $||\cdot||$ the norm of $W^{1,p}(\Omega)$ given by
$$||u|| = \left[||u||^p_p+||Du||^p_p\right]^{\frac{1}{p}}\ \mbox{for all}\ u\in W^{1,p}(\Omega).$$

The Banach space $C^1(\overline\Omega)$ is an ordered Banach space with positive (order) cone given by
$$C_+ = \{u\in C^1(\overline\Omega):u(z)\geq0\ \mbox{for all}\ z\in\overline\Omega\}.$$

This cone has a nonempty interior containing the set
$$D_+ = \{u\in C_+:u(z)>0\ \mbox{for all}\ z\in\overline\Omega\}.$$

On $\partial\Omega$ we consider the $(N-1)$-dimensional Hausdorff (surface) measure $\sigma(\cdot)$. Using this measure, we can define in the usual way the ``boundary" Lebesgue spaces $L^q(\partial\Omega)$ (for $1\leq q\leq\infty$). From the theory of Sobolev spaces we know that there exists a unique continuous linear map $\gamma_0:W^{1,p}(\Omega)\rightarrow L^p(\partial\Omega)$ known as the ``trace map", such that
$$\gamma_0(u) = u|_{\partial\Omega}\ \mbox{for all}\ u\in W^{1,p}(\Omega)\cap C(\overline\Omega).$$

Hence the trace map assigns boundary values to any Sobolev function. The trace map is compact into $L^q(\partial\Omega)$ for all $q\in\left[1, \frac{p(N-1)}{N-p}\right)$ if $p<N$ and for all $q\in[1,+\infty)$ if $p\geq N$. Also, we have
$${\rm im}\,\gamma_0 = W^{-\frac{1}{p'},p}\ (\partial\Omega)(\frac{1}{p}+\frac{1}{p'}=1)\ \mbox{and}\ \ker\gamma_0 = W^{1,p}_0(\Omega).$$

In what follows, for the sake of notational simplicity, we drop the use of the map $\gamma_0$. All restrictions of Sobolev functions on $\partial\Omega$ are understood in the sense of traces.

For $x\in\RR$, let $x^{\pm}=\max\{\pm x,0\}$. Then for any function $u(\cdot)$ we define
$$u^\pm(\cdot)=u(\cdot)^\pm .$$

If $u\in W^{1,p}(\Omega)$, then $u^\pm\in W^{1,p}(\Omega),\ u = u^+-u^-$ and $|u|=u^++u^-$.

Our hypotheses on the potential function $\xi(\cdot)$ and the boundary coefficient $\beta(\cdot)$ are the following:

\smallskip
$\bullet\ $\textbf{$H(\xi)$}: $\xi\in L^\infty(\Omega)$;

\smallskip
$\bullet\ $\textbf{$H(\beta)$}: $\beta\in C^{0,\alpha}(\partial\Omega)$ with $\alpha\in(0,1)$ and $\beta(z)\geq0$ for all $x\in\partial\Omega$.

\begin{remark}\label{rem1}
	If $\beta\equiv0$, then we recover the Neumann problem.
\end{remark}
	
	Let $\gamma:W^{1,p}(\Omega)\rightarrow\RR$ be the $C^1$-functional defined by
	$$\gamma(u) = \int_\Omega G(Du)dz + \int_\Omega\xi(z)|u|^pdz + \int_{\partial\Omega}\beta(z)|u|^pd\sigma\ \mbox{for all}\ u\in W^{1,p}(\Omega).$$
	Also, let $f_0:\Omega\times\RR\rightarrow\RR$ be a Carath\'eodory function such that
	$$|f_0(z,x)|\leq a_0(z)(1+|x|^{r-1})\ \mbox{for almost all}\ z\in\Omega,\ \mbox{and all}\ x\in\RR,$$
	with $a_0\in L^\infty(\Omega)_+, 1<r\leq p^*$ where $p^*=\left\{\begin{array}{ll}
		\frac{Np}{N-p} & \mbox{if}\ p<N \\
		+\infty & \mbox{if}\ p\geq N
	\end{array}\right.$ (the critical Sobolev exponent). Let $F_0(z,x)=\int^x_0f_0(z,s)ds$ and consider the $C^1$-functional $\psi_0:W^{1,p}(\Omega)\rightarrow\RR$ defined by
	$$\psi_0(u)=\frac{1}{p}\gamma(u) - \int_{\Omega} F_0(z,u)dz\ \mbox{for all}\ u\in W^{1,p}(\Omega).$$
	
	The following result is due to Papageorgiou and R\u{a}dulescu \cite{19} and is an outgrowth of the nonlinear regularity theory of Lieberman \cite{11}.

\begin{prop}\label{prop5}
	Assume that hypotheses $H(a)(i), (ii), (iii), H(\xi), H(\beta)$ hold and $u_0\in W^{1,p}(\Omega)$ is a local $C^1(\overline\Omega)$-minimizer of $\psi_0$, that is, there exists $\rho_0>0$ such that
	$$\psi_0(u_0)\leq\psi_0(u_0+h)\ \mbox{for all}\ h\in C^1(\overline\Omega),\ ||h||_{C^1(\overline\Omega)}\leq \rho_0.$$
	Then $u_0\in C^{1,\eta}(\overline\Omega)$ for some $\eta\in(0,1)$ and $u_0$ is also a local $W^{1,p}(\Omega)$-minimizer of $\psi_0$, that is, there exists $\rho_1>0$ such that
	$$\psi_0(u_0)\leq\psi_0(u_0+h)\ \mbox{for all}\ h\in W^{1,p}(\Omega),\ ||h||\leq\rho_1.$$
\end{prop}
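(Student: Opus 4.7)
\medskip

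\noindent\textbf{Proof plan.} The conclusion splits into two parts, and I would treat them sequentially.

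\emph{Regularity of $u_0$.} Being a local $C^{1}(\overline\Omega)$-minimizer of $\psi_0$, the function $u_0$ satisfies $\langle\psi_0'(u_0),h\rangle=0$ for every $h\in C^{1}(\overline\Omega)$ (test with $th$ for small $t$). Density of $C^{1}(\overline\Omega)$ in $W^{1,p}(\Omega)$ then yields $\psi_0'(u_0)=0$, so $u_0$ is a weak solution of the Euler--Lagrange problem
\[
-\,\mathrm{div}\,a(Du_0)+\xi(z)|u_0|^{p-2}u_0=f_0(z,u_0)\ \text{in }\Omega,\qquad \tfrac{\partial u_0}{\partial n_a}+\beta(z)|u_0|^{p-2}u_0=0\ \text{on }\partial\Omega.
\]
Combining a Moser iteration (legitimate because $r\leq p^{*}$ and $a_0\in L^{\infty}(\Omega)_{+}$) with the nonlinear boundary regularity of Lieberman \cite{11}, together with the structure conditions $H(a)(i),(ii),(iii)$ and $H(\beta)$, I obtain $u_0\in L^{\infty}(\Omega)$ and then $u_0\in C^{1,\eta}(\overline\Omega)$ for some $\eta\in(0,1)$.

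\emph{Passage from $C^{1}$- to $W^{1,p}$-minimality.} Here I would argue by contradiction. Suppose $u_0$ is not a local $W^{1,p}(\Omega)$-minimizer. For each integer $n\geq1$ consider
\[
m_n=\inf\bigl\{\psi_0(u):u\in\overline{B}_{1/n}(u_0)\bigr\},\qquad \overline{B}_{1/n}(u_0)=\{u\in W^{1,p}(\Omega):\|u-u_0\|\leq 1/n\}.
\]
The direct method applies: Corollary \ref{c3} gives coercivity of $u\mapsto\int_\Omega G(Du)$ combined with boundedness of $\overline{B}_{1/n}(u_0)$, the $\xi$- and $\beta$-terms are weakly continuous (by $H(\xi)$, $H(\beta)$ and compactness of $\gamma_0$), while the subcritical Sobolev embedding handles $\int_\Omega F_0(z,\cdot)$. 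Hence the infimum is attained at some $v_n\in\overline{B}_{1/n}(u_0)$, and the contradiction hypothesis, combined with $v_n\to u_0$ in $W^{1,p}(\Omega)$, guarantees that for infinitely many $n$ we have $m_n<\psi_0(u_0)$ and $\|v_n-u_0\|=1/n$ (the minimizer lies on the boundary of the ball).

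\emph{Lagrange multiplier and uniform $C^{1,\eta}$ estimates.} Since $v_n$ minimizes $\psi_0$ on $\overline{B}_{1/n}(u_0)$ and hits the boundary, there is $\lambda_n\geq 0$ with
\[
\langle\psi_0'(v_n),h\rangle+\lambda_n\langle J'(v_n-u_0),h\rangle=0\quad\text{for all }h\in W^{1,p}(\Omega),
\]
where $J(w)=\tfrac{1}{p}\|w\|^{p}$. Written out, $v_n$ solves a Robin problem of the same structural type as the one for $u_0$ but with a perturbed right-hand side that, because $v_n\to u_0$ in $W^{1,p}$ and $\lambda_n$ is easily seen to be bounded (test with $h=v_n-u_0$ and exploit monotonicity of $A$ from Proposition \ref{prop4}), is uniformly controlled in $L^\infty$. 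Running Moser iteration uniformly in $n$ produces a uniform $L^\infty$-bound for $\{v_n\}$, and then Lieberman's boundary $C^{1,\eta}$-estimates \cite{11} yield $\|v_n\|_{C^{1,\eta}(\overline\Omega)}\leq M$ for some $\eta\in(0,1)$ independent of $n$. Compactness of $C^{1,\eta}(\overline\Omega)\hookrightarrow C^{1}(\overline\Omega)$ then furnishes a subsequence with $v_n\to u_0$ in $C^{1}(\overline\Omega)$, while $\psi_0(v_n)<\psi_0(u_0)$ contradicts the $C^{1}$-minimality hypothesis.

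\emph{Main obstacle.} Everything hinges on producing the uniform $C^{1,\eta}$ bound for $\{v_n\}$: one must ensure that the Lagrange multipliers $\lambda_n$ stay bounded and that the Moser scheme and Lieberman estimates can be applied with constants independent of $n$. This is the only delicate point; the rest is a direct-method and density argument.

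\qed
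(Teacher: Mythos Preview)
The paper does not supply its own proof of this proposition; it states the result and attributes it to Papageorgiou and R\u{a}dulescu \cite{19} (see the sentence immediately preceding Proposition~\ref{prop5}). Your outline is precisely the standard Brezis--Nirenberg-type argument, adapted to the nonhomogeneous Robin setting, that underlies \cite{19} and its predecessors, so in spirit you are reproducing the cited proof.

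Two small corrections to your sketch. First, the contradiction hypothesis only forces $\psi_0(v_n)<\psi_0(u_0)$, hence $v_n\neq u_0$; it does \emph{not} force $v_n$ onto $\partial\overline{B}_{1/n}(u_0)$. If $v_n$ happens to be interior you simply take $\lambda_n=0$ and the equation is cleaner, so nothing is lost. Second, your suggested route to bound $\lambda_n$ (test with $h=v_n-u_0$) does work, but the usual way to bypass the issue altogether is to divide the Euler--Lagrange relation by $1+\lambda_n$: the resulting principal part
\[
y\longmapsto \tfrac{1}{1+\lambda_n}\,a(y)+\tfrac{\lambda_n}{1+\lambda_n}\,|y-Du_0(z)|^{p-2}(y-Du_0(z))
\]
satisfies the structural conditions $H(a)(i)$--$(iii)$ with constants independent of $n$ (here one uses $u_0\in C^{1,\eta}(\overline\Omega)$ from the first step), and the right-hand side is uniformly bounded. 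Lieberman's estimates \cite{11} then give the uniform $C^{1,\eta}$ bound directly, with no separate control of $\lambda_n$ required. With these adjustments your plan is complete and matches the argument in \cite{19}.
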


In the special case of semilinear equations (that is, when $a(y)=y$ for all $y\in\RR^\NN$), we will be able to improve the multiplicity theorem and produce additional nodal solutions. In this case we can also relax the requirements on the potential function $\xi(z)$ and make use of the spectrum of $u\mapsto -\Delta u+\xi(z)u$ with Robin boundary condition.

So, we consider the following linear eigenvalue problem
\begin{equation}\label{eq4}
	\left\{
	\begin{array}{ll}
		-\Delta u(z) + \xi(z)u(z) = \hat{\lambda}u(z)\ \mbox{in}\ \Omega, \\
		\frac{\partial u}{\partial n} + \beta(z)u = 0\ \mbox{on}\ \partial\Omega.
	\end{array}
	\right\}
\end{equation}

Now we assume that
$$\xi\in L^s(\Omega)\ \mbox{with}\ s>N\ \mbox{and}\ \beta\in W^{1,\infty}(\partial\Omega)\ \mbox{with}\ \beta(z)\geq0\ \mbox{for all}\ x\in\partial\Omega.$$

We consider the $C^1$-functional $\hat{\gamma}:H^1(\Omega)\rightarrow\RR$ defined by
$$\hat{\gamma}(u) = ||Du||^2_2 + \int_\Omega\xi(z)u^2dz + \int_{\partial\Omega}\beta(z)u^2d\sigma\ \mbox{for all}\ u\in H^1(\Omega).$$

From D'Agui, Marano and Papageorgiou \cite{3}, we know that there exists $\mu>0$ such that
\begin{equation}\label{eq5}
	\hat{\gamma}(u) + \mu||u||^2_2 \geq c_6||u||^2\ \mbox{for all}\ u\in H^1(\Omega),\ \mbox{and some}\ c_6>0.
\end{equation}

Using (\ref{eq5}) and the spectral theorem for compact self-adjoint operators on a Hilbert space, we show that the spectrum $\hat{\sigma}(2)$ of (\ref{eq4}) consists of a sequence $\{\hat{\lambda}_k\}_{k\geq1}$ of distinct eigenvalues which satisfy $\hat{\lambda}_k\rightarrow+\infty$ as $k\rightarrow+\infty$. By $E(\hat{\lambda}_k)$ we denote the corresponding eigenspace. We can say the following about these items:
\begin{itemize}
	\item [(i)] $\hat{\lambda}_1$ is simple (that is, $\dim E(\hat{\lambda}_1)=1$) and
		\begin{equation}\label{eq6}
			\hat{\lambda}_1 = \inf\left\{\frac{\hat{\gamma}(u)}{||u||^2_2}:u\in H^1(\Omega),\ u\neq0\right\}.
		\end{equation}
	\item [(ii)] For every $m\geq2$ we have
		\begin{equation}\label{eq7}
			\begin{array}{ll}
				\hat{\lambda}_m & = \inf\left\{\frac{\hat{\gamma}(u)}{||u||^2_2}:u\in \overline{\underset{k\geq m}{\oplus}E(\hat{\lambda}_k)}, u\neq0\right\} \\
				& = \sup\left\{\frac{\hat{\gamma}(u)}{||u||^2_2}:u\in \overset{m}{\underset{k=1}{\oplus}}E(\hat{\lambda}_k), u\neq0\right\}
			\end{array}
		\end{equation}
	\item [(iii)] For every $k\in\NN,\ E(\hat{\lambda}_k)$ is finite dimensional, $E(\hat{\lambda}_k)\subseteq C^1(\overline\Omega)$, and it has the ``Unique Continuation Property" (``UCP" for short), that is, if $u\in E(\hat{\lambda}_k)$ vanishes on a set of positive measure, then $u\equiv0$ (see de Figueiredo and Gossez \cite{5}).
\end{itemize}

In relation (\ref{eq6}), the infimum is realized on $E(\hat{\lambda}_1)$, while in (\ref{eq7}), both the infimum and the supremum are realized on $E(\hat{\lambda}_m)$. Moreover, from the above properties we see that the elements of $E(\hat{\lambda}_1)$ have constant sign, while the elements of $E(\hat{\lambda}_m)$ (for $ m\geq2$) are all nodal (that is, sign changing). By $\hat{u}_1$ we denote the $L^2$-normalized (that is, $||\hat{u}_1||_2=1$) positive eigenfunction corresponding to $\hat{\lambda}_1$. From the regularity theory of Wang \cite{27}, we have that $\hat{u}_1\in C_+$ and using the Harnack inequality (see, for example, Motreanu, Motreanu and Papageorgiou \cite[p. 211]{13}), we have that $\hat{u}_1(z)>0$ for all $z\in\Omega$. Furthermore, if we assume that $\xi^+\in L^\infty(\Omega)$, then $\hat{u}_1\in D_+$ (by the strong maximum principle).

Finally, let us recall some basic definitions and facts from Morse theory (critical groups), which we will need in the sequel.

With $X$ being a Banach space, let $(Y_1, Y_2)$ be a topological pair such that $Y_2\subseteq Y_1\subseteq X$. For every $k\in\NN_0$, let $H_k(Y_1, Y_2)$ denote the $k$th relative singular homology group with integer coefficients for the pair $(Y_1, Y_2)$. For $k<0$, we have $H_k(Y_1, Y_2)=0$.

For $\varphi\in C^1(X,\RR)$ and $c\in\RR$ we introduce the following sets:
\begin{eqnarray*}
	\varphi^c = \{u\in X:\varphi(u)\leq c\}, \\
	K_\varphi = \{u\in X:\varphi'(u)=0\}, \\
	K^c_\varphi = \{u\in K_\varphi:\varphi(u)=c\}.
\end{eqnarray*}

Suppose that $u\in K^c_\varphi$ is isolated. Then the critical groups of $\varphi$ at $u$ are defined by
$$C_k(\varphi, u) = H_k(\varphi^c\cap U, \varphi^c\cap U\backslash\{u\})\ \mbox{for all}\ k\in\NN_0.$$

Here, $U$ is a neighborhood of $u$ such that $K_\varphi\cap\varphi^c\cap U=\{u\}$. The excision property of singular homology theory implies that the above definition of critical groups is independent of the choice of the isolating neighborhood $U$.

Suppose that $\varphi\in C^1(X,\RR)$ satisfies the C-condition and that $\inf\varphi(K_\varphi)>-\infty$. Then the critical groups of $\varphi$ at infinity are defined by
$$C_k(\varphi, \infty) = H_k(X, \varphi^c)\ \mbox{for all}\ k\in\NN_0,\ \mbox{with}\ c<\inf\varphi(K_\varphi).$$

This definition is independent of the choice of $c<\inf\varphi(K_\varphi)$. To see this, let $c'<\inf\varphi(K_\varphi)$ and without any loss of generality assume that $c'<c$. Then from Motreanu, Motreanu and Papageorgiou \cite[Theorem 5.34, p. 110]{13},  we have that
\begin{eqnarray*}
	&& \varphi^{c'}\ \mbox{is a strong deformation retract of}\ \varphi^c, \\
	&\Rightarrow & H_k(X,\varphi^c) = H_k(X,\varphi^{c'})\ \mbox{for all}\ k\in\NN_0 \\
	&& \mbox{(see Motreanu, Motreanu and Papageorgiou \cite[Corollary 6.15, p. 145]{13})}.
\end{eqnarray*}

Assume that $K_\varphi$ is finite. We introduce the following quantities
\begin{eqnarray*}
	M(t, u) & = & \underset{k\in\NN_0}{\sum}{\rm rank}\, C_k(\varphi,u)t^k\ \mbox{for all}\ t\in\RR, u\in K_\varphi, \\
	P(t, \infty) & = & \underset{k\in\NN_0}{\sum}{\rm rank}\, C_k(\varphi, \infty)t^k\ \mbox{for all}\ t\in\RR.
\end{eqnarray*}

The Morse relation says that
\begin{equation}\label{eq8}
	\underset{u\in K_\varphi}{\sum} M(t,u) = P(t,\infty) + (1+t)Q(t),
\end{equation}
where $Q(t)=\underset{k\in\NN_0}{\sum}\beta_kt^k$ is a formal series in $t\in\RR$ with nonnegative integer coefficients.

Let $H$ be a Hilbert space, $u\in H$, and $U$ a neighborhood of $u$. Suppose that $\varphi\in C^2(U)$. If $u\in K_\varphi$, then the ``Morse index" $m$ of $u$ is defined to be the supremum of the dimensions of the vector subspaces of $H$ on which $\varphi''(u)$ is negative definite. The ``nullity" $\nu$ of $u$ is the dimension of $\ker\varphi''(u)$. We say that $u\in K_\varphi$ is ``nondegenerate" if $\varphi''(u)$ is invertible (that is, $\nu=0$). Suppose that $\varphi\in C^2(U)$ and $u\in K_\varphi$ is isolated and nondegenerate with Morse index $m$. Then
$$C_k(\varphi, u) = \delta_{k,m}\ZZ\ \mbox{for all}\ k\in\NN_0.$$

Here $\delta_{k,m}$ denotes the Kronecker symbol, that is,
$$\delta_{k,m} = \left\{\begin{array}{ll}
	1 & \mbox{if}\ k=m \\
	0 & \mbox{if}\ k\neq m.
\end{array}\right.$$

\section{Solutions of Constant Sign}

In this section, we produce solutions of constant sign for problem (\ref{eq1}). We assume the following conditions on the reaction term $f(z,x)$.

\smallskip
\textbf{$H(f)_1$}: $f:\Omega\times\RR\rightarrow\RR$ is a Carath\'eodory function such that $f(z,0)=0$ for almost all $z\in\Omega$ and
\begin{itemize}
	\item [(i)] there exist $\eta>0$ and $a_\eta\in L^\infty(\Omega)_+$ such that
		$$|f(z,x)|\leq a_\eta(z)\ \mbox{for almost all}\ z\in\Omega,\ \mbox{and all}\ x\in[-\eta, \eta];$$
	\item [(ii)] if $F(z,x)=\int^x_0f(z,s)ds$, then there exist $\eta_0>0, q\in(1,\tau)$ ($\tau>1$ as in hypothesis $H(a)(iv)$) and $\delta_0>0$ such that
		$$\eta_0|x|^q\leq f(z,x)x\leq qF(z,x)\ \mbox{for almost all}\ z\in\Omega,\ \mbox{and all}\ |x|\leq\delta_0;$$
	\item [(iii)] with $\eta>0$ as in $(i)$ we have
		$$f(z,\eta) - \xi(z)\eta^{p-1}  \leq 0 \leq f(z, -\eta) + \xi(z)\eta^{p-1}\ \mbox{for almost all}\ z\in\Omega.$$
\end{itemize}
\begin{remark}
	We see that no global growth condition is imposed on $f(z,\cdot)$. All our hypotheses on $f(z,\cdot)$ concern its behaviour near zero. Note that $H(f)_1,\, (ii),\, (iii)$ imply a kind of oscillatory behaviour near zero for $x\mapsto f(z,x)-\xi(z)|x|^{p-2}x$.
\end{remark}

Evidently, we can find $\vartheta_0>0$ such that
\begin{equation}\label{eq9}
	f(z,x)x \geq \eta_0|x|^q - \vartheta_0|x|^p\ \mbox{for almost all}\ z\in\Omega,\ \mbox{and all}\ |x|\leq\eta .
\end{equation}

Then we define
\begin{equation}\label{eq10}
	\mu(z,x)=\left\{\begin{array}{ll}
		-\eta_0 \eta^{q-1} + \vartheta_0\eta^{p-1} & \mbox{if}\ x<-\eta \\
		\eta_0|x|^{q-2}x-\vartheta_0|x|^{p-2}x & \mbox{if}\ -\eta\leq x\leq\eta \\
		\eta_0\eta^{q-1}-\vartheta_0\eta^{p-1} & \mbox{if}\ \eta<x.
	\end{array}\right.
\end{equation}

Note that $\mu(z,x)$ is a Carath\'eodory function and for all $z\in\Omega,\ \mu(z,\cdot)$ is odd. We consider the following auxiliary Robin problem
\begin{equation}\label{eq11}
	\left\{\begin{array}{ll}
		-{\rm div}\, a(Du(z)) + ||\xi^+||_\infty|u(z)|^{p-2}u(z)=\mu(z,u(z))\quad \mbox{in}\ \Omega, \\
		\frac{\partial u}{\partial n_a} + \beta(z)|u|^{p-2}u=0\quad \mbox{on}\ \partial\Omega.
	\end{array}\right\}
\end{equation}

In what follows, given $h_1, h_2\in W^{1,p}(\Omega)$, we set
$$[h_1, h_2] = \{u\in W^{1,p}(\Omega):h_1(z)\leq u(z)\leq h_2(z)\ \mbox{for almost all}\ z\in\Omega\}.$$
\begin{prop}\label{prop6}
	If hypotheses $H(a), H(\xi), H(\beta)$ hold, then problem (\ref{eq10}) admits a unique positive solution
	$$\tilde{u}\in[0,\eta]\cap D_+$$
	and since (\ref{eq10}) is odd, then $\tilde{v} = -\tilde{u}\in[-\eta,0]\cap D_+$ is the unique negative solution of (\ref{eq10}).
\end{prop}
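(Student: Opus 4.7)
My approach is variational: obtain $\tilde u$ as the global minimizer of a truncated $C^1$-functional whose critical points are positive solutions of (\ref{eq11}), then prove uniqueness through a Diaz-S\'a\'a type identity underpinned by the convexity built into $H(a)(iv)$.

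For existence, I consider
$$\hat\psi_+(u)=\int_\Omega G(Du)\,dz+\frac{\|\xi^+\|_\infty}{p}\|u\|_p^p+\frac{1}{p}\int_{\partial\Omega}\beta(z)|u|^p\,d\sigma-\int_\Omega M(z,u^+)\,dz$$
on $W^{1,p}(\Omega)$, where $M(z,x)=\int_0^x\mu(z,s)\,ds$. Corollary \ref{c3} and the boundedness of $\mu$ make $\hat\psi_+$ coercive and sequentially weakly lower semicontinuous, so a global minimizer $\tilde u$ exists. Evaluating $\hat\psi_+$ at small positive constants $t>0$ gives $\hat\psi_+(t)=O(t^p)-\eta_0 t^q|\Omega|/q$, which is negative for small $t$ because $q<p$; hence $\tilde u\neq 0$. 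From $\hat\psi_+'(\tilde u)=0$, testing with $-\tilde u^-$ and with $(\tilde u-\eta)^+$ — and, after possibly enlarging $\vartheta_0$ still in compliance with (\ref{eq9}) so that $\eta_0\eta^{q-1}-\vartheta_0\eta^{p-1}\le\|\xi^+\|_\infty\eta^{p-1}$ — yields $0\le\tilde u\le\eta$ via the strict monotonicity of $A$ (Proposition \ref{prop4}) and the positivity of the boundary term. On the range $[0,\eta]$, equation (\ref{eq11}) reduces to
$$-\,{\rm div}\,a(D\tilde u)+(\|\xi^+\|_\infty+\vartheta_0)\tilde u^{p-1}=\eta_0\tilde u^{q-1}\ \text{in}\ \Omega,$$
the nonlinear regularity theory (Proposition \ref{prop5}) upgrades $\tilde u$ to $C_+\setminus\{0\}$, and the Pucci-Serrin nonlinear strong maximum principle then places $\tilde u$ in $D_+$.

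For uniqueness, suppose $\tilde u_1,\tilde u_2\in[0,\eta]\cap D_+$ are two positive solutions. Since both are bounded away from zero on $\overline\Omega$, the functions
$$h_1=\frac{\tilde u_1^\tau-\tilde u_2^\tau}{\tilde u_1^{\tau-1}},\qquad h_2=\frac{\tilde u_2^\tau-\tilde u_1^\tau}{\tilde u_2^{\tau-1}}$$
lie in $W^{1,p}(\Omega)$ and are admissible test functions. Testing the equation for $\tilde u_i$ against $h_i$ and summing, the convexity of $t\mapsto G_0(t^{1/\tau})$ from $H(a)(iv)$ yields the nonhomogeneous Diaz-S\'a\'a type inequality
$$\sum_{i=1}^{2}\int_\Omega(a(D\tilde u_i),Dh_i)_{\RR^\NN}\,dz\ge 0.$$
The interior $p$-order terms collapse to $(\|\xi^+\|_\infty+\vartheta_0)\int_\Omega(\tilde u_1^\tau-\tilde u_2^\tau)(\tilde u_1^{p-\tau}-\tilde u_2^{p-\tau})\,dz\ge 0$ because $p\ge\tau$, and the boundary integrals are $\ge 0$ as well; meanwhile the reaction side equals $\eta_0\int_\Omega(\tilde u_1^\tau-\tilde u_2^\tau)(\tilde u_1^{q-\tau}-\tilde u_2^{q-\tau})\,dz\le 0$ since $q<\tau$ makes $s\mapsto s^{q-\tau}$ strictly decreasing. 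Comparing the two sides forces equality everywhere, and the strict monotonicity then gives $\tilde u_1=\tilde u_2$. Finally, the oddness of $\mu(z,\cdot)$ makes $v\mapsto -v$ a bijection between positive and negative solutions, whence $\tilde v:=-\tilde u$ is the unique negative solution and lies in $[-\eta,0]$. The decisive obstacle is the Diaz-S\'a\'a inequality for the nonhomogeneous operator: it relies crucially on the convexity encoded in $H(a)(iv)$ and on the strict exponent gap $q<\tau$, together with the compatibility $\tau\le p$ to treat the absorption term.
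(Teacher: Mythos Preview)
Your proof is correct and follows essentially the same strategy as the paper: minimize a truncated coercive functional to obtain a nontrivial critical point, confine it to $[0,\eta]$ by testing with $-\tilde u^-$ and $(\tilde u-\eta)^+$, upgrade via regularity and the strong maximum principle, and then prove uniqueness through a Diaz--Sa\'a argument resting on the convexity of $t\mapsto G_0(t^{1/\tau})$ from $H(a)(iv)$.

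A few remarks on the differences. First, the paper adds an extra penalization $+\vartheta|u|^{p-2}u$ (with $\vartheta>0$) to the truncated reaction, which makes the sign arguments for $\tilde u\ge 0$ and $\tilde u\le\eta$ completely routine; your device of enlarging $\vartheta_0$ achieves the same end. Second, to show the infimum is negative the paper evaluates at $tu$ with $u\in D_+$ and invokes the bound $G(y)\le c_7|y|^\tau$ from $H(a)(iv)$, whereas your test with small positive constants is simpler and avoids $H(a)(iv)$ at that step (constants have zero gradient, so only the $O(t^p)$ lower-order terms compete with $-\eta_0 t^q|\Omega|/q$). Third, for uniqueness the paper packages the Diaz--Sa\'a computation into a convex integral functional $j(u)=\int_\Omega G(Du^{1/\tau})\,dz+\cdots$ and uses the monotonicity of its G\^ateaux derivative, while you test the two equations directly with $h_i=(\tilde u_1^\tau-\tilde u_2^\tau)/\tilde u_i^{\tau-1}$ and sum; the resulting identity is the same, and your sign analysis of the $(p-\tau)$- and $(q-\tau)$-powers matches the paper's inequality~(\ref{eq19}). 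One small correction: the regularity upgrade to $C^1(\overline\Omega)$ comes from Lieberman's theory (as cited in the paper), not from Proposition~\ref{prop5}, which concerns the equivalence of $C^1$ and $W^{1,p}$ local minimizers.
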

\begin{proof}
	We first show the existence of a positive solution. So, let $\vartheta>0$ be such that
	$$\eta_0\leq[||\xi^+||_\infty+\vartheta]\eta^{p-1}.$$
	
	We introduce the following Carath\'eodory function
	\begin{equation}\label{eq12}
		\hat{\mu}_+(z,x)=\left\{\begin{array}{ll}
			0 & \mbox{if}\ x<0 \\
			\mu(z,x) + \vartheta x^{p-1} & \mbox{if}\ 0\leq x\leq\eta \\
			\mu(z,\eta) + \vartheta\eta^{p-1} & \mbox{if}\ \eta<x.
		\end{array}\right.
	\end{equation}
	
We set $\hat{M}_+(z,x) = \int^x_0\hat{\mu}_+(z,s)ds$ and consider the $C^{1}$-functional $\psi_+:W^{1,p}(\Omega)\rightarrow\RR$ defined by
\begin{eqnarray*}
	&&\psi_+(u) = \int_\Omega G(Du)dz + \frac{||\xi^+||_\infty+\vartheta}{p}||u||^p_p + \frac{1}{p}\int_{\partial\Omega}\beta(z)|u|^pd\sigma - \int_\Omega\hat{M}_+(z,u)dz\\
	&&\mbox{for all}\ u\in W^{1,p}(\Omega).
\end{eqnarray*}

Corollary \ref{c3}, hypothesis $H(\beta)$ and (\ref{eq12}) imply that
$$\psi_+\ \mbox{is coercive}.$$

Also, using the Sobolev embedding theorem and the compactness of the trace map, we see that $\psi_+$ is sequentially weakly lower semicontinuous. Invoking the Weierstrass-Tonelli theorem, we can find $\tilde{u}\in W^{1,p}(\Omega)$ such that
\begin{equation}\label{eq13}
	\psi_+(\tilde{u}) = \inf\{\psi_+(u):u\in W^{1,p}(\Omega)\}.
\end{equation}

Hypothesis $H(a)(iv)$ implies that we can find $c_7>0$ and $\delta\in(0,\delta_0]$ such that
\begin{equation}\label{eq14}
	G(y)\leq c_7|y|^{\tau}\ \mbox{for all}\ y\in\RR^N\ \mbox{with}\ |y|\leq\delta\,.
\end{equation}

Let $u\in D_+$ and choose small $t\in(0,1)$ such that $tu\leq\delta_0$. Then we have
\begin{eqnarray*}
	&&\psi_+(tu)\leq t^{\tau}||Du||^\tau_\tau + c_8t^p||u||^p_p - c_9t^q||u||^q_q\\
	&&\mbox{for some}\ c_8,c_9>0\ \mbox{(see (\ref{eq12}), (\ref{eq14}) and hypothesis $H(\beta)$)}.
\end{eqnarray*}

Recall that $1<q<\tau\leq p$. So, by choosing $t\in(0,1)$ even smaller if necessary, we have
$$\begin{array}{ll}
	& \psi_+(tu)<0, \\
	\Rightarrow & \psi_+(\tilde{u})<0=\psi_+(0)\ \mbox{(see (\ref{eq13}))}, \\
	\Rightarrow & \tilde{u}\neq0.
\end{array}$$

From (\ref{eq13}) we have
\begin{eqnarray}\label{eq15}
	& \psi'_+(\tilde{u})=0 \nonumber \\
	\Rightarrow & \langle A(\tilde{u}, h)\rangle + (||\xi^+||_\infty+\vartheta)\int_\Omega|\tilde{u}|^{p-2}\tilde{u}hdz + \int_{\partial\Omega}\beta(z)|\tilde{u}|^{p-2}\tilde{u}hd\sigma = \\
	& \int_\Omega\hat{\mu}_+(z,\tilde{u})hdz\ \mbox{for all}\ h\in W^{1,p}(\Omega). \nonumber
\end{eqnarray}

In (\ref{eq15}) we choose $h=-\tilde{u}^{-}\in W^{1,p}(\Omega)$. Using Lemma \ref{lem2}, we have
\begin{eqnarray*}
	&& \frac{c_1}{p-1}||D\tilde{u}^-||^p_p + [||\xi^+||_\infty+\vartheta] ||\tilde{u}^{-}||^p_p\leq0 \\
	&\Rightarrow & \tilde{u}\geq0,\ \tilde{u} \neq0.
\end{eqnarray*}

Also in (\ref{eq15}) we choose $h=(\tilde{u}-\eta)^+\in W^{1,p}(\Omega)$. Then
\begin{eqnarray*}
	&& \langle A(\tilde{u}), (\tilde{u}-\eta)^+\rangle + [||\xi^+||_\infty+\vartheta]\int_\Omega\tilde{u}^{p-1}(\tilde{u}-\eta)^+dz + \int_{\partial\beta}\beta(z)\tilde{u}^{p-1}(\tilde{u}-\eta)^+d\sigma \\
	&= & \int_\Omega[\eta_0\eta^{q-1}-(\vartheta_0-\vartheta)\eta^{p-1}](\tilde{u}-\eta)^+dz\ (\mbox{see (\ref{eq12}), (\ref{eq10})}) \\
	&\leq & \langle A(\eta), (\tilde{u}-\eta)^+\rangle + [||\xi^+||_\infty+\vartheta]\int_\Omega\eta^{p-1}(\tilde{u}-\eta)^+dz + \int_{\partial\Omega}\beta(z)\tilde{u}^{p-1}(\tilde{u}-\eta)^+d\sigma \\
	&& \mbox{(recall that $\eta_0\leq||\xi^+||_\infty\eta^{p-q}$ and see hypothesis $H(\beta)$)}, \\
	\Rightarrow& & \langle A(\tilde{u}) - A(\eta),(\tilde{u}-\eta)^+ \rangle + [||\xi^+||_\infty + \vartheta]\int_\Omega(\tilde{u}^{p-1}-\eta^{p-1})(\tilde{u}-\eta)^+dz\leq0, \\
	\Rightarrow && \tilde{u}\leq\eta.
\end{eqnarray*}

So, we have proved that
\begin{equation}\label{eq16}
	\tilde{u}\in[0,\eta],\ \tilde{u}\neq0.
\end{equation}

Using (\ref{eq10}), (\ref{eq12}) and (\ref{eq16}) in (\ref{eq15}), we obtain
\begin{eqnarray}\label{eq17}
	&& \langle A(\tilde{u}),h\rangle+||\xi^+||_\infty\int_\Omega\tilde{u}^{p-1}hdz + \int_{\partial\Omega}\beta(z)\tilde{u}^{p-1}hd\sigma = \int_\Omega[\eta_0\tilde{u}^{q-1}-\vartheta_0\tilde{u}^{p-1}]hdz\nonumber\\
	&&\mbox{for all}\ h\in W^{1,p}(\Omega), \nonumber \\
	&\Rightarrow & -{\rm div}\,a(D\tilde{u}(z)) + ||\xi^+||_\infty\tilde{u}(z)^{p-1} = \eta_0\tilde{u}(z)^{q-1}-\vartheta_0\tilde{u}(z)^{p-1}\ \mbox{for almost all}\ z\in\Omega, \nonumber \\
	&& \frac{\partial\tilde{u}}{\partial n_a} + \beta(z)\tilde{u}^{p-1}=0\ \mbox{on}\ \partial\Omega\ \mbox{(see Papageorgiou and R\u{a}dulescu \cite{16})}.
\end{eqnarray}

From (\ref{eq17}) and Papageorgiou and R\u{a}dulescu \cite{19}, we have
$$\tilde{u}\in L^\infty(\Omega).$$

Then from the regularity theory of Lieberman \cite{11} we have
$$\tilde{u}\in C_+\backslash\{0\}.$$

From (\ref{eq17}) we have
$${\rm div}\,a(D\tilde{u}(z))\leq[||\xi^+||_{\infty}+\vartheta_0]\tilde{u}(z)^{p-1}\ \mbox{for almost all}\ z\in\Omega.$$

Hence by the nonlinear strong maximum principle of Pucci and Serrin \cite[pp. 111, 120]{23}, we have
$$\tilde{u}\in D_+.$$

Next, we show that this positive solution is unique. To this end, we introduce the integral functional $j:L^1(\Omega)\rightarrow\overline{\RR}=\RR\cup\{+\infty\}$ defined by
$$j(u)=\left\{\begin{array}{ll}
	\int_\Omega G(Du^{\frac{1}{\tau}})dz + \frac{||\xi^+||_\infty}{p}||u||^{\frac{p}{\tau}}_{\frac{p}{\tau}} + \frac{1}{p}\int_{\partial\Omega}\beta(z)u^{\frac{p}{\tau}}d\sigma\  & \mbox{if}\ u\geq0,\ u^{\frac{1}{\tau}}\in W^{1,p}(\Omega) \\
		+\infty\ & \mbox{otherwise}.
\end{array}\right.$$

Suppose that $u_1,u_2\in {\rm dom}\, j=\{u\in L^1(\Omega):j(u)<\infty\}$ (the effective domain of $j(\cdot)$).

Let $y_1=u_1^{\frac{1}{\tau}}, y_2=u_2^{\frac{1}{\tau}}$. Then $y_1, y_2\in W^{1,p}(\Omega)$. We set
$$y=[tu_1 + (1-t)u_2]^{\frac{1}{\tau}}\ \mbox{for every}\ t\in[0,1].$$

We have $y\in W^{1,p}(\Omega)$. Using Lemma 1 of Diaz and Saa \cite{4}, we have
\begin{eqnarray*}
	|Dy(z)| & \leq & [t|Dy_1(z)|^\tau + (1-t)|Dy_2(z)|^\tau]^{\frac{1}{\tau}}, \\
	\Rightarrow G_0(|Dy(z)|) & \leq & G_0([t|Dy_1(z)|^\tau] + (1-t)|Dy_2(z)|^\tau)^{\frac{1}{\tau}}\ \mbox{(since $G_0(\cdot)$ is increasing)} \\
	& \leq & tG_0(|Dy_1(z)|) + (1-t)G_0(|Dy_2(z)|)\ \mbox{for almost all}\ z\in\Omega\ \\
	& & \mbox{(see hypothesis $H(a)(iv)$)} \\
	\Rightarrow G(Dy(z)) & \leq & tG(Du_1(z)^{\frac{1}{\tau}}) + (1-t)G(Du_2(z)^{\frac{1}{\tau}})\ \mbox{for almost all}\ z\in\Omega, \\
	\Rightarrow & u\mapsto &\int_\Omega G(Du^{\frac{1}{\tau}})dz\ \mbox{is convex}.
\end{eqnarray*}

Also since $\tau\leq p$ and $\beta\geq0$ (see hypothesis $H(\beta)$), it follows that
$${\rm dom}\, j\ni u\mapsto\frac{||\xi^+||_\infty}{p}||u||^{\frac{p}{\tau}}_{\frac{p}{\tau}} + \frac{1}{p}\int_{\partial\Omega}\beta(z)u^{\frac{p}{\tau}}dz\ \mbox{is convex}.$$

It follows that the integral functional $j(\cdot)$ is convex and, by Fatou's lemma, it is lower semicontinuous.

Suppose that $\tilde{u}, \hat{u}\in W^{1,p}(\Omega)$ are two positive solutions of the auxiliary problem (\ref{eq11}). From the first part of the proof we have
\begin{equation}\label{eq18}
	\tilde{u},\hat{u}\in [0,\eta]\cap D_+.
\end{equation}

Therefore for every $h\in C_1(\overline{\Omega})$ and for $|t|$ small, we have
$$\tilde{u}+th\in {\rm dom}\, j\ \mbox{and}\ \hat{u}+th\in {\rm dom}\,j.$$

Because of the convexity of $j(\cdot)$, we see that $j(\cdot)$ is G\^ateaux differentiable at $\tilde{u}^\tau$ and at $\hat{u}^\tau$ in the direction $h$. Using the chain rule and the nonlinear Green's identity (see Gasinski and Papageorgiou \cite[p. 210]{6}), we get
\begin{eqnarray*}
	j'(\tilde{u}^\tau)(h) & = & \frac{1}{\tau}\int_\Omega\frac{-{\rm div}\, a(D\tilde{u}) + ||\xi^+||_\infty\tilde{u}^{p-1}}{\tilde{u}^{\tau-1}}hdz \\
	j'(\hat{u}^\tau)(h) & = & \frac{1}{\tau}\int_\Omega\frac{-{\rm div}\, a(D\hat{u}) + ||\xi^+||_\infty\tilde{u}^{p-1}}{\hat{u}^{\tau-1}}hdz\ \mbox{for all}\ h\in W^{1,p}(\Omega).
\end{eqnarray*}

Recall that $j(\cdot)$ is convex, hence $j'(\cdot)$ is monotone. Hence we have
\begin{eqnarray}\label{eq19}
	0 & \leq & \int_\Omega\left[\frac{-{\rm div}\, a(D\tilde{u}) + ||\xi^+||_\infty\tilde{u}^{p-1}}{\tilde{u}^{\tau-1}} - \frac{-{\rm div}\, a(D\hat{u}) + ||\xi^+||_\infty\hat{u}^{p-1}}{\hat{u}^{\tau-1}}\right](\tilde{u}^\tau-\hat{u}^{\tau})dz \nonumber \\
	& = & \int_\Omega\left[\frac{\mu(z,\tilde{u})}{\tilde{u}^{\tau-1}} - \frac{\mu(z,\hat{u})}{\hat{u}^{\tau-1}}\right](\tilde{u}^\tau-\hat{u}^\tau)dz\ \mbox{(see (\ref{eq11}))} \nonumber \\
	& = & \int_\Omega\left(\eta_0\left[\frac{1}{\tilde{u}^{\tau-q}} - \frac{1}{\hat{u}^{\tau-q}}\right] - \vartheta_0\left[\tilde{u}^{p-\tau}-\hat{u}^{p-\tau}\right]\right)(\tilde{u}^\tau-\hat{u}^\tau)dz\ \mbox{(see (\ref{eq10}) and (\ref{eq18}))}.
\end{eqnarray}

By hypothesis $q<\tau\leq p$. So, from (\ref{eq19}) we infer that
$$\tilde{u} = \hat{u}.$$

This proves the uniqueness of the positive solution
$$\tilde{u}\in[0,\eta]\cap D_+.$$

Equation (\ref{eq11}) is odd. So, it follows that
$$\tilde{v}=-\tilde{u}\in[-\eta,0]\cap(-D_+)$$
is the unique negative solution of (\ref{eq11}).
\end{proof}

Next, we produce constant sign solutions for (\ref{eq1}). For this purpose we introduce the sets
$$S_+ = \mbox{the set of positive solutions for problem (\ref{eq11}) in the order interval}\ [0,\eta],$$
$$S_- = \mbox{the set of negative solutions for problem (\ref{eq11}) in the order interval}\ [-\eta, 0].$$
\begin{prop}\label{prop7}
	If hypotheses $H(a), H(\xi), H(\beta), H(f)_1$ hold, then $\emptyset\neq S_+\subseteq D_+$ and $\emptyset\neq S_-\subseteq -D_+$.
\end{prop}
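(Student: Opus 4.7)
I sketch the construction of $S_+$; $S_-$ follows by a symmetric argument. The idea, as in the existence half of Proposition \ref{prop6}, is to replace the (possibly unbounded) reaction by a truncation that coincides with $f$ on $[0,\eta]$, minimize the resulting coercive functional, and verify \emph{a posteriori} that the minimizer lies in $[0,\eta]$, so the truncation becomes inactive and one recovers a solution of (\ref{eq1}).

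For a fixed $\vartheta>0$ I introduce the Carath\'eodory function
$$\hat{f}_+(z,x) = \left\{\begin{array}{ll} 0 & \mbox{if}\ x\leq 0, \\ f(z,x) + [||\xi^+||_\infty + \vartheta - \xi(z)]x^{p-1} & \mbox{if}\ 0<x\leq\eta, \\ f(z,\eta) + [||\xi^+||_\infty + \vartheta - \xi(z)]\eta^{p-1} & \mbox{if}\ \eta<x, \end{array}\right.$$
which is uniformly bounded on $\Omega\times\RR$ thanks to $H(f)_1(i)$ and $H(\xi)$. With $\hat{F}_+(z,x) = \int^x_0 \hat{f}_+(z,s)ds$, I consider
$$\varphi_+(u) = \int_\Omega G(Du)\,dz + \frac{||\xi^+||_\infty+\vartheta}{p}||u||^p_p + \frac{1}{p}\int_{\partial\Omega}\beta(z)|u|^p\,d\sigma - \int_\Omega \hat{F}_+(z,u)\,dz.$$
Corollary \ref{c3}, the sign of $\beta$, and the (at most) linear growth of $\hat{F}_+$ make $\varphi_+$ coercive; Sobolev embedding and trace compactness make it sequentially weakly lower semicontinuous. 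Hence Weierstrass--Tonelli produces a global minimizer $\tilde{u}_+\in W^{1,p}(\Omega)$.

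Nontriviality of $\tilde{u}_+$ is argued exactly as in Proposition \ref{prop6}: for $u\in D_+$ and small $t\in(0,1)$ with $tu\leq\delta_0$, hypothesis $H(a)(iv)$ yields $\int_\Omega G(tDu)\,dz\leq c_7 t^\tau ||Du||^\tau_\tau$, while $H(f)_1(ii)$ (integrating $f(z,s)\geq\eta_0 s^{q-1}$) gives $\hat{F}_+(z,tu)\geq F(z,tu)\geq (\eta_0/q)(tu)^q$, so $\varphi_+(tu)\leq c_8 t^\tau + c_9 t^p - c_{10} t^q ||u||^q_q<0$ for $t$ small since $q<\tau\leq p$. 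From $\varphi_+'(\tilde{u}_+)=0$, testing with $-\tilde{u}_+^-$ and using $\hat{f}_+(z,x)=0$ for $x\leq 0$ together with Lemma \ref{lem2}(c) and $\beta\geq 0$ yields $\tilde{u}_+\geq 0$. Testing with $(\tilde{u}_+-\eta)^+$, hypothesis $H(f)_1(iii)$ gives $f(z,\eta)\leq\xi(z)\eta^{p-1}$, so $\hat{f}_+(z,\tilde{u}_+)\leq[||\xi^+||_\infty+\vartheta]\eta^{p-1}$ on $\{\tilde{u}_+>\eta\}$; combining this with $(a(D\eta),D(\tilde{u}_+-\eta)^+)_{\RR^\NN}=0$ (since $D\eta=0$) reproduces the same monotonicity comparison $\langle A(\tilde{u}_+)-A(\eta),(\tilde{u}_+-\eta)^+\rangle\geq 0$ used in Proposition \ref{prop6}, forcing $\tilde{u}_+\leq\eta$.

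With $\tilde{u}_+\in[0,\eta]$ the truncation is inactive and the Euler equation of $\varphi_+$ rewrites as the weak formulation of (\ref{eq1}), so $\tilde{u}_+\in S_+$. Lieberman's nonlinear regularity (\cite{11}, applied via \cite{19}) gives $\tilde{u}_+\in C^1(\overline\Omega)$, and since $f(\cdot,\tilde{u}_+)-\xi\tilde{u}_+^{p-1}$ is uniformly bounded with $\tilde{u}_+\leq\eta$, the Pucci--Serrin strong maximum principle \cite{23} yields $\tilde{u}_+\in D_+$. The mirror construction, truncating at level $-\eta$ and invoking the second inequality in $H(f)_1(iii)$, produces $\emptyset\neq S_-\subseteq -D_+$. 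The \textbf{main obstacle} is the upper bound $\tilde{u}_+\leq\eta$: absent any global growth on $f$, it must be extracted from the pointwise ordering in $H(f)_1(iii)$ at exactly the truncation threshold, via the monotonicity comparison with the constant function $\eta$.
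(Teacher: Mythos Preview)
Your argument is correct and follows essentially the same route as the paper's proof: truncate and perturb $f$ on $[0,\eta]$, minimize the resulting coercive functional, then recover $\tilde u_+\in[0,\eta]$ via the test functions $-\tilde u_+^-$ and $(\tilde u_+-\eta)^+$, and finish with regularity and the strong maximum principle. The only cosmetic difference is bookkeeping: the paper keeps $\xi(z)$ in the quadratic part of the functional and uses the constant perturbation $\vartheta>||\xi||_\infty$ in $\hat f_+$, whereas you shift $\xi(z)$ into $\hat f_+$ and use $||\xi^+||_\infty+\vartheta$ as the constant coefficient; both choices lead to the same Euler equation on $[0,\eta]$ and the same comparison with the constant $\eta$. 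For the final step $\tilde u_+\in D_+$, the paper makes explicit the inequality ${\rm div}\,a(D\tilde u_+)\leq[||\xi||_\infty+c_{11}]\tilde u_+^{p-1}$ (obtained from $f(z,x)+c_{11}x^{p-1}\geq 0$ on $[0,\eta]$ via $H(f)_1(i),(ii)$); your ``$f-\xi\tilde u_+^{p-1}$ bounded'' remark should be sharpened to this differential inequality before invoking Pucci--Serrin, but that is a one-line fix.
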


\begin{proof}
	Let $\eta>0$ be as in hypothesis $H(f)_1(iii)$ and fix $\vartheta>||\xi||_{\infty}$ (see hypothesis $H(\xi)$). We introduce the following Carath\'eodory function
	\begin{equation}\label{eq20}
		\hat{f}_+(z,x)=\left\{\begin{array}{ll}
			0 & \mbox{if}\ x<0 \\
			f(z,x)+\vartheta x^{p-1} & \mbox{if}\ 0\leq x\leq\eta \\
			f(z,\eta) + \vartheta\eta^{p-1} & \mbox{if}\ \eta<x.
		\end{array} \right.
	\end{equation}
	
	We set $\hat{F}_+(z,x)=\int^x_0\hat{f}_+(z,s)ds$ and consider the $C^1$-functional $\hat{\varphi}_+:W^{1,p}(\Omega)\rightarrow\RR$ defined by
	\begin{eqnarray*}
		&&\hat{\varphi}_+(u)=\int_\Omega G(Du)dz  + \frac{1}{p}\int_\Omega(\xi(z)+\vartheta)|u|^pdz + \frac{1}{p}\int_{\partial\Omega}\beta(z)|u|^pd\sigma - \int_\Omega\hat{F}_+(z,u)dz\\
		&&\mbox{for all}\ u\in W^{1,p}(\Omega).
	\end{eqnarray*}
	
	Using Lemma \ref{lem2}, the fact that $\vartheta>||\xi||_\infty$, hypothesis $H(\beta)$ and (\ref{eq20}), we see that
	$$\hat{\varphi}_+\ \mbox{is coercive}.$$
	
	Also, $\hat{\varphi}_+$ is sequentially weak lower semicontinuous. So, by the Weierstrass-Tonelli theorem, we can find $u_0\in W^{1,p}(\Omega)$ such that
	\begin{equation}\label{eq21}
		\hat{\varphi}_+(u_0)=\inf \left\{\hat{\varphi}_+(u):u\in W^{1,p}(\Omega)\right\}.
	\end{equation}
	
	Hypothesis $H(f)_1(ii)$ implies that
	$$\begin{array}{ll}
		& \hat{\varphi}_+(u_0)<0=\hat{\varphi}_+(0)\ \mbox{(see the proof of Proposition \ref{prop6})} \\
		\Rightarrow & u_0\neq0.
	\end{array}$$
	
	From (\ref{eq21}) we have
	\begin{eqnarray}\label{eq22}
		&& \hat{\varphi}'_+(u_0)=0, \\
		&\Rightarrow & \langle A(u_0, h)\rangle + \int_\Omega(\xi(z)+\vartheta)|u_0|^{p-2}u_0hdz + \int_{\partial\Omega}\beta(z)|u_0|^{p-2}u_0 hd\sigma = \int_\Omega\hat{f}_+(z,u_0)hdz \nonumber \\
		&& \mbox{for all}\ h\in W^{1,p}(\Omega).\nonumber
	\end{eqnarray}
	
	In (\ref{eq22}) we choose $h=-u^-_0\in W^{1,p}(\Omega)$. Then
	$$\begin{array}{ll}
		& \frac{c_1}{p-1}||Du_0^-||^p_p + c_{10}||u^-_0||^p_p \leq 0\ \mbox{for some}\ c_{10}>0 \\
		& \mbox{(see Lemma \ref{lem2}, (\ref{eq20}), hypothesis $H(\beta)$ and recall that $\vartheta>||\xi||_\infty$)} \\
		\Rightarrow & u_0\geq0,\ u_0\neq0.
	\end{array}$$
	
	Next, in (\ref{eq22}) we choose $h=(u_0-\eta)^+\in W^{1,p}(\Omega)$. Then
	\begin{eqnarray*}
		&& \langle A(u_0), (u_0-\eta)^+\rangle + \int_\Omega(\xi(z)+\vartheta)u^{p-1}_0(u_0-\eta)^+dz + \int_{\partial\Omega} \beta(z)u_0^{p-1}(u_0-\eta)^+d\sigma \\
		&= & \int_\Omega[f(z,\eta)+\vartheta\eta^{p-1}](u_0-\eta)^+dz\ \mbox{(see (\ref{eq20}))} \\
		&\leq & \langle A(\eta), (u_0-\eta)^+\rangle + \int_\Omega(\xi(z)+\vartheta)\eta^{p-1}(u_0-\eta)^+dz + \int_{\partial\Omega}\beta(z)u_0^{p-1}(u_0-\eta)^+d\sigma \\
		&& \mbox{(note that $A(\eta)=0$ and see hypotheses $H(f)_1(iii), H(\beta)$),} \\
		\Rightarrow && \langle A(u_0)-A(\eta), (u_0-\eta)^+\rangle + \int_\Omega(\xi(z)+\vartheta)(u_0^{p-1}-\eta^{p-1})(u_0-\eta)^+dz\leq0, \\
		\Rightarrow && u_0\leq\eta\ \mbox{(recall that $\vartheta>||\xi||_\infty$)}.
	\end{eqnarray*}
	
	So, we have proved that
	\begin{equation}\label{eq23}
		u_0\in[0,\eta],\ u_0\neq0.
	\end{equation}
	
	On account of (\ref{eq20}) and (\ref{eq23}), equation (\ref{eq22}) becomes
	\begin{eqnarray}\label{eq24}
		&& \langle A(u_0),h \rangle + \int_\Omega\xi(z)u^{p-1}_0hdz + \int_{\partial\Omega}\beta(z)u_0^{p-1}hdz\sigma = \int_\Omega f(z,u_0)hdz\nonumber\\
		&&\mbox{for all}\ h\in W^{1,p}(\Omega), \nonumber \\
		&\Rightarrow & -{\rm div}\,a(Du_0(z)) + \xi(z)u_0(z)^{p-1} = f(z,u_0(z))\ \mbox{for almost all}\ x\in\Omega , \nonumber \\
		&& \frac{\partial u_0}{\partial n_a} + \beta(z)u_0^{p-1}=0\ \mbox{on}\ \partial\Omega\ \mbox{(see Papageorgiou and R\u{a}dulescu \cite{16})}.
	\end{eqnarray}
	
	From (\ref{eq23}), (\ref{eq24}), hypothesis $H(f)_1(i)$ and Papageorgiou and R\u{a}dulescu \cite{19}, we have
	$$u_0\in L^\infty(\Omega).$$
	
	Next, applying the nonlinear regularity theory of Lieberman \cite{11}, we have
	$$u_0\in C_+\backslash\{0\}.$$
	
	Hypotheses $H(f)_1(i), (ii)$ imply that we can find $c_{11}>0$ such that
	$$f(z,x)+c_{11}x^{p-1}\geq0\ \mbox{for almost all}\ x\in\Omega,\ \mbox{and all}\ 0\leq x\leq\eta.$$
	
	Then (\ref{eq23}) and (\ref{eq24}) imply that
	\begin{eqnarray*}
		& {\rm div}\,a(Du_0(z))\leq[||\xi||_\infty + c_{11}] u_0(z)^{p-1}\ \mbox{for almost all}\ z\in\Omega, \\
		\Rightarrow & u_0\in D_+\ \mbox{(see Pucci and Serrin \cite[pp. 111, 120]{23})}.
	\end{eqnarray*}
	
	Therefore we have proved that $\emptyset\neq S_+\subseteq D_+$.
	
	For negative solutions we consider the Carath\'eodory function
	\begin{equation}\label{eq25}
		\hat{f}_-(z,x)=\left\{\begin{array}{ll}
			f(z,-\eta)-\vartheta\eta^{p-1} & \mbox{if}\ x<0 \\
			f(z,x)+\vartheta|x|^{p-2}x & \mbox{if}\ -\eta\leq x\leq0 \\
			0 & \mbox{if}\ 0<x
		\end{array}\right.
	\end{equation}
	(recall that $\vartheta>||\xi||_\infty$). Let $\hat{F}_-(z,x)=\int^x_0\hat{f}_-(z,s)ds$ and consider the $C^1$-functional $\hat{\varphi}_-:W^{1,p}(\Omega)\rightarrow\RR$ defined by
	\begin{eqnarray*}
		&&\hat{\varphi}_-(u) = \int_\Omega G(Du)dz + \frac{1}{p}\int_\Omega(\xi(z)+\vartheta)|u|^p dz + \frac{1}{p}\int_{\partial\Omega}\beta(z)|u|^pd\sigma - \int_\Omega \hat{F}_-(z,u)dz\\
		&&\mbox{for all}\ u\in W^{1,p}(\Omega).
	\end{eqnarray*}
	
	Reasoning as above, using this time $\hat{\varphi}_-$ and (\ref{eq25}), we produce a negative solution
	$$v_0\in[-\eta, 0]\cap (-D_+).$$
	
	Therefore $\emptyset\neq S_-\subseteq -D_+$.
\end{proof}

The next result provides a lower bound for the elements of $S_+$ and an upper bound for the elements of $S_-$. These bounds will lead to the existence of extremal constant sign solutions.
\begin{prop}\label{prop8}
	If hypotheses $H(a),H(\xi),H(\beta),H(f)_1$ hold, then $\tilde{u}\leq u$ for all $u\in S_+$ and $v\leq\tilde{v}$ for all $v\in S_-.$
\end{prop}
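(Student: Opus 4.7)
The plan is to show $\tilde u\le u$ for arbitrary $u\in S_+$ by a truncation-and-minimization argument: starting from the observation that every $u\in S_+$ is a supersolution of the auxiliary problem (\ref{eq11}), I construct a truncated functional whose minimizer $\tilde u_\ast$ is forced to lie beneath $u$, and then invoke the uniqueness part of Proposition~\ref{prop6} to identify $\tilde u_\ast=\tilde u$. To see the supersolution property, fix $u\in S_+\subseteq[0,\eta]\cap D_+$; since $\xi(z)\le||\xi^+||_\infty$ a.e.\ and since (\ref{eq9}) together with (\ref{eq10}) imply $f(z,u(z))\ge\mu(z,u(z))$ on $[0,\eta]$, one has
\[-{\rm div}\,a(Du)+||\xi^+||_\infty u^{p-1}=f(z,u)+(||\xi^+||_\infty-\xi(z))u^{p-1}\ge\mu(z,u).\]

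Mimicking (\ref{eq12}), fix $\vartheta>0$ as in Proposition~\ref{prop6} and define the truncated Carath\'eodory reaction $\hat e_+(z,x)$ to be $0$ for $x<0$, equal to $\mu(z,x)+\vartheta x^{p-1}$ on $[0,u(z)]$, and equal to $\mu(z,u(z))+\vartheta u(z)^{p-1}$ for $x>u(z)$. Setting $\hat E_+(z,x)=\int_0^x\hat e_+(z,s)\,ds$, consider
\[\tilde\psi_+(w)=\int_\Omega G(Dw)\,dz+\frac{||\xi^+||_\infty+\vartheta}{p}||w||_p^p+\frac1p\int_{\partial\Omega}\beta(z)|w|^p\,d\sigma-\int_\Omega\hat E_+(z,w)\,dz.\]
Since $\hat e_+$ is bounded in $x$, $\tilde\psi_+$ is coercive and sequentially weakly lower semicontinuous; let $\tilde u_\ast$ be a global minimizer. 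Exactly as in Proposition~\ref{prop6}---evaluating $\tilde\psi_+(tv)$ for small $t>0$ and $v\in D_+$ with $tv\le\min\{\delta_0,u\}$, and using hypotheses $H(a)(iv)$ and $H(f)_1(ii)$---one gets $\tilde\psi_+(\tilde u_\ast)<0$, so $\tilde u_\ast\ne 0$; testing $\tilde\psi_+'(\tilde u_\ast)=0$ with $-\tilde u_\ast^-$ gives $\tilde u_\ast\ge 0$.

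The decisive step is to test $\tilde\psi_+'(\tilde u_\ast)=0$ with $(\tilde u_\ast-u)^+\in W^{1,p}(\Omega)$ and subtract the supersolution inequality for $u$ tested against the same non-negative function. Because $\hat e_+(\cdot,\tilde u_\ast)=\mu(\cdot,u)+\vartheta u^{p-1}$ on $\{\tilde u_\ast>u\}$, the reaction contributions cancel and one is left with
\begin{align*}
&\langle A(\tilde u_\ast)-A(u),(\tilde u_\ast-u)^+\rangle+(||\xi^+||_\infty+\vartheta)\int_\Omega(\tilde u_\ast^{p-1}-u^{p-1})(\tilde u_\ast-u)^+\,dz\\
&\qquad+\int_{\partial\Omega}\beta(z)(\tilde u_\ast^{p-1}-u^{p-1})(\tilde u_\ast-u)^+\,d\sigma\le 0.
\end{align*}
All three summands are non-negative (strict monotonicity of $A$, monotonicity of $x\mapsto x^{p-1}$ on $[0,\infty)$, and $\beta\ge 0$), so each vanishes; the middle term alone forces $\tilde u_\ast\le u$ a.e. Consequently $0\le\tilde u_\ast\le u\le\eta$ activates only the middle branch of $\hat e_+$, so $\tilde u_\ast$ is a non-trivial positive solution of (\ref{eq11}) in $[0,\eta]$, and the uniqueness assertion in Proposition~\ref{prop6} gives $\tilde u_\ast=\tilde u$, yielding $\tilde u\le u$. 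The statement $v\le\tilde v$ for $v\in S_-$ is handled symmetrically, truncating from below at $v(z)$ and invoking the uniqueness of the negative solution of (\ref{eq11}). The main obstacle is the bookkeeping in this comparison step: one must arrange the two non-negative slack terms $(||\xi^+||_\infty-\xi(z))u^{p-1}$ and $f(z,u)-\mu(z,u)$ on the correct side of the subtraction, which is precisely why $||\xi^+||_\infty$ (rather than $\xi(z)$) appears in the auxiliary problem (\ref{eq11}) and why $\mu$ is defined as in (\ref{eq10}).
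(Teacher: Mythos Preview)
Your proof is correct and follows essentially the same approach as the paper: truncate the auxiliary reaction at the level $u(z)$, minimize the resulting coercive functional to obtain a nontrivial nonnegative critical point $\tilde u_\ast$, test with $(\tilde u_\ast-u)^+$ to force $\tilde u_\ast\le u$, and then invoke the uniqueness statement of Proposition~\ref{prop6} to identify $\tilde u_\ast=\tilde u$. The only cosmetic difference is that you phrase the comparison step explicitly as ``$u$ is a supersolution of (\ref{eq11})'' and subtract two weak formulations, whereas the paper writes out the same chain of inequalities inline.
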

\begin{proof}
	Let $u\in S_+$ and consider the following Carath\'eodory function
	\begin{eqnarray}\label{eq26}
		\gamma_+(z,x)=\left\{\begin{array}{ll}
			0&\mbox{if}\ x<0\\
			\eta_0 x^{q-1}-(\vartheta_0-\hat{\vartheta})x^{p-1}&\mbox{if}\ 0\leq x\leq u(z),\\
			\eta_0u(z)^{q-1}-(\vartheta_0-\hat{\vartheta})u(z)^{p-1}&\mbox{if}\ u(z)<x
		\end{array}\right.\ \ \hat{\vartheta}>0.
	\end{eqnarray}
	
	We set $\Gamma_+(z,x)=\int^x_0\gamma_+(z,s)ds$ and consider the $C^1$-functional $\hat{k}_+:W^{1,p}(\Omega)\rightarrow\RR$ defined  by
	\begin{eqnarray*}
		&&\hat{k}_+(u)=\int_{\Omega}G(Du)dz+\frac{1}{p}[||\xi^+||_{\infty}+\hat{\vartheta}]||u||^p_p+\int_{\partial\Omega}\beta(z)|u|^pd\sigma-\int_{\Omega}\Gamma_+(z,u)dz\\
		&&\mbox{for all}\ u\in W^{1,p}(\Omega).
	\end{eqnarray*}
	
	Evidently, $\hat{k}_+$ is coercive (see Lemma \ref{lem2}, (\ref{eq26}) and recall that $\hat{\vartheta}>0$). Also, it is sequentially weakly lower semicontinuous. So, we can find $\hat{u}\in W^{1,p}(\Omega)$ such that
	\begin{equation}\label{eq27}
		\hat{k}_+(\hat{u})=\inf\{\hat{k}_+(u):u\in W^{1,p}(\Omega)\}.
	\end{equation}
	
	By (\ref{eq26}) and since $q<\tau\leq p$, we have
	\begin{eqnarray*}
		&&\hat{k}_+(\hat{u})<0=\hat{k}_+(0)\ (\mbox{see the proof of Proposition \ref{prop6}}),\\
		&\Rightarrow&\hat{u}\neq 0.
	\end{eqnarray*}
	
	From (\ref{eq27}) we have
	\begin{eqnarray}\label{eq28}
		&&\hat{k}'_+(\hat{u})\neq 0,\nonumber\\
		&\Rightarrow&\left\langle A(\hat{u},h)\right\rangle+[||\xi^+||_{\infty}+\hat{\vartheta}]\int_{\Omega}|\hat{u}|^{p-2}\hat{u}hdz+\int_{\partial\Omega}\beta(z)|\hat{u}|^{p-2}\hat{u}hdz=\int_{\Omega}\gamma_+(z,\hat{u})hdz\\
		&&\mbox{for all}\ h\in W^{1,p}(\Omega).\nonumber
	\end{eqnarray}
	
	In (\ref{eq28}) we choose $h=-\hat{u}^-\in W^{1,p}(\Omega)$. We obtain
	$$\hat{u}\geq 0,\ \hat{u}\neq 0.$$
	
	Also, in (\ref{eq28}) we choose $h=(\hat{u}-u)^+\in W^{1,p}(\Omega)$. Then
	\begin{eqnarray*}
		&&\left\langle A(\hat{u}),(\hat{u}-u)^+\right\rangle+[||\xi^+||_{\infty}+\hat{\vartheta}]\int_{\Omega}\hat{u}^{p-1}(\hat{u}-u)^+dz+\int_{\partial\Omega}\beta(z)\hat{u}^{p-1}(\hat{u}-u)^+d\sigma\\
		&=&\int_{\Omega}[\eta_0u^{q-1}-\vartheta_0u^{p-1}](\hat{u}-u)^+dz+\hat{\vartheta}\int_{\Omega}u^{p-1}(\hat{u}-u)^+dz\ (\mbox{see (\ref{eq26})})\\
		&\leq&\int_{\Omega}[f(z,u)+\hat{\vartheta}u^{p-1}](\hat{u}-u)^+dz\ (\mbox{see (\ref{eq9})})\\
		&=&\left\langle A(u),(\hat{u}-u)^+\right\rangle+\int_{\Omega}(\xi(z)+\hat{\vartheta})u^{p-1}(\hat{u}-u)^+dz+\int_{\partial\Omega}\beta(z)u^{p-1}(\hat{u}-u)^+d\sigma\\
		&&(\mbox{since}\ u\in S_+)\\
		&\leq&\left\langle A(u),(\hat{u}-u)^+\right\rangle+[||\xi^+||_{\infty}+\hat{\vartheta}]\int_{\Omega}u^{p-1}(\hat{u}-u)^+dz+\int_{\partial\Omega}\beta(z)u^{p-1}(\hat{u}-u)^+d\sigma,\\
		&\Rightarrow&\left\langle A(\hat{u})-A(u),(\hat{u}-u)^+\right\rangle+[||\xi^+||_{\infty}+\hat{\vartheta}]\int_{\Omega}(\hat{u}^{p-1}-u^{p-1})(\hat{u}-u)^+dz\leq 0\\
		&&(\mbox{see hypothesis}\ H(\beta)),\\
		&\Rightarrow&\hat{u}\leq u.
	\end{eqnarray*}
	
	From (\ref{eq26}) we see that $\hat{u}$ is a positive solution of (\ref{eq11}) and so
	\begin{eqnarray*}
		&&\hat{u}=\tilde{u}\in D_+\ (\mbox{see Proposition \ref{prop6}})\\
		&\Rightarrow&\tilde{u}\leq u\ \mbox{for all}\ u\in S_+.
	\end{eqnarray*}
	
	Similarly we show that
	$$v\leq\tilde{v}\ \mbox{for all}\ v\in S_-.$$
The proof is now complete.
\end{proof}

We are now ready to produce extremal constant sign solutions for problem (\ref{eq1}), that is, a smallest positive solution $\bar{u}_+$ and a biggest negative solution $\bar{v}_-$. In the next section, using $\bar{u}_+$ and $\bar{v}_-$ we will produce a nodal (sign-changing) solution.
\begin{prop}\label{prop9}
	If hypotheses $H(a),H(\xi),H(\beta),H(f)_1$ hold, then problem (\ref{eq1}) admits a smallest positive solution $\bar{u}_+\in D_+$ and a biggest negative solution $\bar{v}_-\in -D_+$.
\end{prop}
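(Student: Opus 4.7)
The plan is to produce $\bar u_+$ (the case of $\bar v_-$ being symmetric) by exhibiting it as the limit of a suitably chosen decreasing sequence in $S_+$. The ingredients are already on the table: $S_+\neq\emptyset$ and is bounded above by $\eta$ in $L^\infty$ (Proposition \ref{prop7}), it is bounded below by the auxiliary solution $\tilde u\in D_+$ (Proposition \ref{prop8}), the operator $A$ is of type $(S)_+$ (Proposition \ref{prop4}), and Lieberman's regularity together with the Pucci--Serrin maximum principle lifts any nontrivial positive solution into $D_+$.

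The first and main step is to show that $S_+$ is \emph{downward directed}: given $u_1,u_2\in S_+$, there exists $u\in S_+$ with $u\le\min\{u_1,u_2\}$. I would prove this by a variational truncation argument analogous to the proof of Proposition \ref{prop7}. Namely, set $m=\min\{u_1,u_2\}\in W^{1,p}(\Omega)\cap L^\infty(\Omega)$ and introduce the Carath\'eodory truncation
\[
\hat g(z,x)=\begin{cases}
0 & \text{if } x<0,\\
f(z,x)+\vartheta x^{p-1} & \text{if } 0\le x\le m(z),\\
f(z,m(z))+\vartheta m(z)^{p-1} & \text{if } x>m(z),
\end{cases}
\]
with $\vartheta>\|\xi\|_\infty$, form the associated coercive, sequentially weakly lower semicontinuous functional on $W^{1,p}(\Omega)$, and take a global minimizer $u$. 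Using $-u^-$ as a test function yields $u\ge0$; using $(u-m)^+$, together with the fact that $A$ is monotone on the comparison $\langle A(u)-A(u_i),(u-u_i)^+\rangle$ applied on the set where $m=u_i$, yields $u\le m$. The functional being negative at a small multiple of $\tilde u\in D_+$ (thanks to $H(f)_1(ii)$ and hypothesis $H(a)(iv)$, exactly as in Proposition \ref{prop6}) guarantees $u\neq0$. Then $u\in S_+$ and $u\le\min\{u_1,u_2\}$.

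Next, invoking the standard fact (Dunford--Schwartz / Hu--Papageorgiou) that a downward directed family in an ordered space admits a decreasing sequence $\{u_n\}_{n\ge1}\subseteq S_+$ with $\inf_n u_n=\inf S_+$ pointwise, I set $\bar u_+(z)=\lim_n u_n(z)$. Since each $u_n\in[\tilde u,\eta]\cap D_+$, testing the equation for $u_n$ with $u_n$ itself and using Lemma \ref{lem2}(c), hypothesis $H(\beta)$, and the $L^\infty$ bound on $f(\cdot,u_n)$ from $H(f)_1(i)$, gives $\{u_n\}$ bounded in $W^{1,p}(\Omega)$. Passing to a subsequence, $u_n\xrightarrow{w}\bar u_+$ in $W^{1,p}(\Omega)$ and $u_n\to\bar u_+$ in $L^p(\Omega)$ and $L^p(\partial\Omega)$. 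Testing the equation for $u_n$ with $u_n-\bar u_+$ and letting $n\to\infty$ yields $\limsup_n\langle A(u_n),u_n-\bar u_+\rangle\le0$, so by Proposition \ref{prop4} we get $u_n\to\bar u_+$ strongly in $W^{1,p}(\Omega)$.

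Finally, I pass to the limit in the weak formulation of (\ref{eq1}) for $u_n$ to conclude that $\bar u_+$ solves (\ref{eq1}). Because $u_n\ge\tilde u$ in $D_+$ for every $n$, one has $\bar u_+\ge\tilde u>0$, so $\bar u_+\in S_+$; the Lieberman regularity theory gives $\bar u_+\in C_+\setminus\{0\}$ and the Pucci--Serrin strong maximum principle (as applied in the proof of Proposition \ref{prop6}) gives $\bar u_+\in D_+$. By construction $\bar u_+\le u$ for every $u\in S_+$, so $\bar u_+$ is the smallest positive solution. The analogous argument applied to $S_-$, which by symmetry is upward directed and bounded above by $\tilde v\in -D_+$, produces the biggest negative solution $\bar v_-\in -D_+$. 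The main technical obstacle is the downward-directedness step, where the truncation must be designed delicately so that the minimizer is simultaneously nontrivial, bounded above by both $u_1$ and $u_2$, and still a genuine solution of the original problem (\ref{eq1}).
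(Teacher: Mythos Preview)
Your proposal is correct and follows essentially the same route as the paper's own proof: the paper invokes the downward directedness of $S_+$ (citing \cite{21} rather than proving it, whereas you sketch the truncation argument), then extracts a decreasing sequence with $\inf_n u_n=\inf S_+$, obtains strong $W^{1,p}$-convergence via the $(S)_+$ property of $A$, passes to the limit in the weak formulation, and uses the lower bound $\tilde u\le\bar u_+$ from Proposition \ref{prop8} together with regularity and the maximum principle to conclude $\bar u_+\in S_+\cap D_+$. The only substantive difference is that you supply an outline of the directedness step; note that in that step the test with $(u-m)^+$ requires splitting $\{u>m\}$ according to which of $u_1,u_2$ realizes the minimum and using the equation for the appropriate $u_i$ on each piece, which is standard but worth spelling out carefully.
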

\begin{proof}
	From Papageorgiou, R\u{a}dulescu and Repov\v{s} \cite{21} we know that
	\begin{itemize}
		\item $S_+$ is downward directed (that is, if $u_1,u_2\in S_+$, then we can find $u\in S_+$ such that $u\leq u_1$, $u\leq u_2$).
		\item $S_-$ is upward directed (that is, if $v_1,v_2\in S_-$, then we can find $v\in S_-$ such that $v_1\leq v$, $v_2\leq v$).
	\end{itemize}
	
	Then as in the proof of Proposition \ref{prop6} of Papageorgiou and R\u{a}dulescu \cite{19}, we can find $\{u_n\}_{n\geq 1}\subseteq S_+$ such that
	\begin{eqnarray}\label{eq29}
		&&\inf S_+=\inf\limits_{n\geq 1}u_n,\ \tilde{u}\leq u_n\ \mbox{for all}\ n\in\NN\ (\mbox{see Proposition \ref{prop8}}).
	\end{eqnarray}
	
	Evidently, $\{u_n\}_{n\geq 1}\subseteq W^{1,p}(\Omega)$ is bounded. So, we may assume that
	\begin{equation}\label{eq30}
		u_n\stackrel{w}{\rightarrow}\bar{u}_+\ \mbox{in}\ W^{1,p}(\Omega)\ \mbox{and}\ u_n\rightarrow \bar{u}_+\ \mbox{in}\ L^p(\Omega)\ \mbox{and}\ L^p(\partial\Omega).
	\end{equation}
	
	We have
	\begin{eqnarray}\label{eq31}
		&&\left\langle A(u_n),h\right\rangle+\int_{\Omega}\xi(z)u_n^{p-1}hdz+\int_{\partial\Omega}\beta(z)u_n^{p-1}hd\sigma=\int_{\Omega}f(z,u_n)hdz\\
		&&\mbox{for all}\ h\in W^{1,p}(\Omega),\ n\in\NN\nonumber.
	\end{eqnarray}
	
	In (\ref{eq31}) we choose $h=u_n-\bar{u}_+\in W^{1,p}(\Omega)$, pass to the limit as $n\rightarrow\infty$ and use (\ref{eq30}). Then we have
	\begin{eqnarray}\label{eq32}
		&&\lim\limits_{n\rightarrow\infty}\left\langle A(u_n),u_n-\bar{u}_+\right\rangle=0,\nonumber\\
		&\Rightarrow&u_n\rightarrow\bar{u}_+\ \mbox{in}\ W^{1,p}(\Omega)\ (\mbox{see Proposition \ref{prop4}}).
	\end{eqnarray}
	
	So, if in (\ref{eq31}) we pass to the limit as $n\rightarrow\infty$ and use (\ref{eq32}), then
	\begin{eqnarray*}
		&&\left\langle A(\bar{u}_+),h)\right\rangle+\int_{\Omega}\xi(z)\bar{u}_+^{p-1}hdz+\int_{\partial\Omega}\beta(z)\bar{u}_+^{p-1}hd\sigma=\int_{\Omega}f(z,\bar{u}_+)hdz\\
		&&\mbox{for all}\ h\in W^{1,p}(\Omega),\\
		&\Rightarrow&-{\rm div}\, a(D\bar{u}_+(z))+\xi(z)\bar{u}_+(z)^{p-1}=f(z,\bar{u}_+(z))\ \mbox{for almost all}\ z\in\Omega,\\
		&&\frac{\partial \bar{u}_+}{\partial n_a}+\beta(z)\bar{u}_+^{p-1}=0\ \mbox{on}\ \partial\Omega\\
		&&(\mbox{see Papageorgiou and R\u{a}dulescu \cite{16}}),\\
		&\Rightarrow&\bar{u}_+\in C_+\ (\mbox{as before, by the nonlinear regularity theory}).
	\end{eqnarray*}
	
	From (\ref{eq29}) and (\ref{eq32}), we have
	$$\tilde{u}\leq\bar{u}_+,\ \mbox{hence}\ \bar{u}_+\neq 0.$$
	
	As before, via the nonlinear maximum principle, we have
	\begin{eqnarray*}
		&&\bar{u}_+\in D_+,\\
		&\Rightarrow&\bar{u}_+\in S_+\ \mbox{and}\ \bar{u}_+=\inf S_+.
	\end{eqnarray*}
	
	Similarly we produce
	$$\bar{v}_-\in S_-\ \mbox{and}\ \bar{v}_-=\sup S_-.$$
The proof is now complete.
\end{proof}

\section{Existence of Nodal Solutions}

In this section, using the extremal constant sign solutions $\bar{v}_-\in -D_+$ and $\bar{u}_+\in D_+$, we produce a nodal (sign changing) solution. The idea is to use truncation techniques to focus on the order interval $[\bar{v}_-,\bar{u}_+]$. Using variational tools we obtain a solution $y_0$ in this order interval, which is distinct from $0,\bar{v}_-,\bar{u}_+$. The extremality of $\bar{v}_-$ and $\bar{u}_+$ means that this solution $y_0$ is necessarily nodal.
\begin{prop}\label{prop10}
	If hypotheses $H(a),H(\xi),H(\beta),H(f)_1$ hold, then problem (\ref{eq1}) admits a nodal solutions $y_0\in[\bar{v}_-,\bar{u}_+]\cap C^1(\overline{\Omega})$.
\end{prop}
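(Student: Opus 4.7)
The plan is to localize the problem to the order interval $[\bar{v}_-,\bar{u}_+]$ by a careful double truncation of $f(z,\cdot)$, so that extremality of $\bar{u}_+$ and $\bar{v}_-$ will force any additional critical point in this interval to be nodal. Concretely, I would define a Carath\'eodory function $k(z,x)$ which equals $f(z,x)+\vartheta|x|^{p-2}x$ when $\bar{v}_-(z)\le x\le\bar{u}_+(z)$, and is extended constantly outside (with $\vartheta>\|\xi\|_\infty$), together with the positive/negative truncations
$$k_+(z,x)=k(z,x^+),\qquad k_-(z,x)=k(z,-x^-).$$
Let $K,K_\pm$ be the primitives and let $\sigma,\sigma_\pm:W^{1,p}(\Omega)\to\RR$ be the associated $C^1$ functionals obtained by adding $\int_\Omega G(Du)dz+\tfrac{1}{p}\int_\Omega(\xi+\vartheta)|u|^p dz+\tfrac{1}{p}\int_{\partial\Omega}\beta|u|^p d\sigma$ and subtracting $\int_\Omega K$ (resp.\ $K_\pm$).

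The first step is to show that $\sigma_+$ is coercive and sequentially weakly lower semicontinuous, hence attains its infimum at some $u_*$. Using a test function $tu$ for $u\in D_+$ small, the $(\tau,q)$-estimate together with $H(f)_1(ii)$ (giving $F(z,x)\ge\tfrac{\eta_0}{q}|x|^q$ near zero with $q<\tau\le p$) yields $\sigma_+(u_*)<0=\sigma_+(0)$, so $u_*\ne 0$. Standard truncation arguments with test functions $-u_*^-$ and $(u_*-\bar{u}_+)^+$, combined with monotonicity of $A$ and the fact that $\bar{u}_+\in S_+$, give $u_*\in[0,\bar{u}_+]$, so $u_*$ solves (\ref{eq1}) in this interval and therefore $u_*\in S_+$; extremality of $\bar{u}_+$ forces $u_*=\bar{u}_+$. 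Similarly $\bar{v}_-$ is the global minimizer of $\sigma_-$. Moreover $\sigma|_{C_+}=\sigma_+|_{C_+}$ near $\bar{u}_+\in D_+$, so $\bar{u}_+$ is a local $C^1(\overline\Omega)$-minimizer of $\sigma$; by Proposition \ref{prop5} it is a local $W^{1,p}$-minimizer of $\sigma$, and symmetrically for $\bar{v}_-$.

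Assume without loss of generality $\sigma(\bar{v}_-)\le\sigma(\bar{u}_+)$. The truncation makes $\sigma$ coercive, hence it satisfies the C-condition. A standard argument shows $\bar{u}_+$ is strict, so there exists $\rho\in(0,\|\bar{v}_--\bar{u}_+\|)$ with $\sigma(\bar{u}_+)<\inf\{\sigma(u):\|u-\bar{u}_+\|=\rho\}=:m_\rho$. Theorem \ref{th1} then yields a critical point $y_0$ of $\sigma$ with $\sigma(y_0)\ge m_\rho>\max\{\sigma(\bar{v}_-),\sigma(\bar{u}_+)\}$, in particular $y_0\notin\{\bar{v}_-,\bar{u}_+\}$. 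Testing $\sigma'(y_0)=0$ against $(y_0-\bar{u}_+)^+$ and $(\bar{v}_--y_0)^+$, and using that $\bar{u}_+\in S_+$, $\bar{v}_-\in S_-$ together with the strict monotonicity of $A$ and the choice $\vartheta>\|\xi\|_\infty$, gives $y_0\in[\bar{v}_-,\bar{u}_+]$; thus $y_0$ solves (\ref{eq1}) and by Lieberman's regularity $y_0\in C^1(\overline\Omega)$.

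The main obstacle is to exclude $y_0=0$; this is where the concave growth condition $H(f)_1(ii)$ is essential. As a mountain pass critical point of mountain pass type, $C_1(\sigma,y_0)\ne 0$. On the other hand, I would compute $C_k(\sigma,0)=0$ for all $k\ge 0$: using the estimate $G(y)\le c_7|y|^\tau$ for small $|y|$ from $H(a)(iv)$, together with $F(z,x)\ge\tfrac{\eta_0}{q}|x|^q$ for $|x|\le\delta_0$ and $q<\tau\le p$, one shows that for every $u\in W^{1,p}(\Omega)\setminus\{0\}$ small enough, $\sigma(tu)<0$ for all sufficiently small $t>0$, i.e.\ $0$ admits no local sub-level neighbourhood and is a ``point of null critical groups'' (this is the standard Moroz--Wang-type computation for concave nonlinearities near the origin). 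Hence $C_1(\sigma,0)=0\ne C_1(\sigma,y_0)$, so $y_0\ne 0$. Combined with $y_0\in[\bar{v}_-,\bar{u}_+]\setminus\{\bar{v}_-,\bar{u}_+\}$, the extremality of $\bar{u}_+$ and $\bar{v}_-$ (Proposition \ref{prop9}) then precludes $y_0$ from having constant sign, so $y_0$ is the desired nodal solution in $C^1(\overline\Omega)$.
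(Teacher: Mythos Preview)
Your proposal is correct and follows essentially the same approach as the paper. The paper uses the identical double truncation (denoted $\ell$ rather than your $k$, with functionals $\tilde\varphi,\tilde\varphi_\pm$), establishes the same two claims that $K_{\tilde\varphi}\subseteq[\bar v_-,\bar u_+]\cap C^1(\overline\Omega)$ and that $\bar u_+,\bar v_-$ are local minimizers of $\tilde\varphi$, applies the mountain pass theorem between them, and excludes $y_0=0$ via the same critical-group comparison $C_1(\tilde\varphi,y_0)\neq 0$ versus $C_k(\tilde\varphi,0)=0$ for all $k\in\NN_0$ (the paper simply cites \cite{17}, Proposition~6, for the latter computation).
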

\begin{proof}
	Let $\bar{v}_-\in-D_+$ and $\bar{u}_+\in D_+$ be the two extremal constant sign solutions produced in Proposition \ref{prop9} and let $\vartheta>||\xi||_{\infty}$. We introduce the following Carath\'eodory function
		\begin{eqnarray}\label{eq33}
			\ell(z,x)=\left\{\begin{array}{ll}
				f(z,\bar{v}_-(z))+\vartheta|\bar{v}_-(z)|^{p-2}\bar{v}_-(z)&\mbox{if}\ x<\bar{v}_-(z)\\
				f(z,x)+\vartheta|x|^{p-2}x&\mbox{if}\ \bar{v}_-(z)\leq x\leq\bar{u}_+(z)\\
				f(z,\bar{u}_+(z))+\vartheta\bar{u}_+(z)^{p-1}&\mbox{if}\ \bar{u}_+(z)<x.
			\end{array}\right.
		\end{eqnarray}
		
		We also consider the positive and negative truncations of $\ell(z,\cdot)$, that is, the Carath\'eodory functions
		$$\ell_{\pm}(z,x)=\ell(z,\pm x^{\pm})\ \mbox{for all}\ (z,x)\in\Omega\times\RR.$$
		
		We set $$L(z,x)=\int^x_0\ell(z,s)ds\quad\mbox{and}\quad L_{\pm}(z,x)=\int^x_0\ell_{\pm}(z,s)ds.$$  Consider the $C^1$-functionals $\tilde{\varphi},\tilde{\varphi}_{\pm}:W^{1,p}(\Omega)\rightarrow\RR$ defined by
		\begin{eqnarray*}
			&&\tilde{\varphi}(u)=\int_{\Omega}G(Du)dz+\frac{1}{p}\int_{\Omega}(\xi(z)+\vartheta)|u|^pdz+\frac{1}{p}\int_{\partial\Omega}\beta(z)|u|^pd\sigma-\int_{\Omega}L(z,u)dz\\
			&&\tilde{\varphi}_{\pm}(u)=\int_{\Omega}G(Du)dz+\frac{1}{p}\int_{\Omega}(\xi(z)+\vartheta)|u|^pdz+\\
			&&\hspace{0.5cm}\frac{1}{p}\int_{\partial\Omega}\beta(z)|u|^pd\sigma-\int_{\Omega}L_{\pm}(z,u)dz\ \mbox{for all}\ u\in W^{1,p}(\Omega).
		\end{eqnarray*}
		
		\begin{claim}\label{cl1}
			$K_{\tilde{\varphi}}\subseteq[\bar{v}_-,\tilde{u}_+]\cap C^1(\overline{\Omega}),\ K_{\tilde{\varphi}_{+}}=\{0,\bar{u}_+\},\ K_{\tilde{\varphi}_-}=\{0,\bar{v}_-\}$.
		\end{claim}
		
		Let $u\in K_{\tilde{\varphi}}$. We have
		\begin{eqnarray}\label{eq34}
			&&\left\langle A(\tilde{u}),h\right\rangle+\int_{\Omega}(\xi(z)+\vartheta)|\tilde{u}|^{p-2}\tilde{u}hdz+\int_{\partial\Omega}\beta(z)|\tilde{u}|^{p-2}\tilde{u}hd\sigma=\int_{\Omega}f(z,\tilde{u})hdz\\
			&&\mbox{for all}\ h\in W^{1,p}(\Omega).\nonumber
		\end{eqnarray}
		
		In (\ref{eq34}) let $h=(u-\bar{u}_+)^+\in\ W^{1,p}(\Omega)$. Then
		\begin{eqnarray*}
			&&\left\langle A(u),(u-\bar{u}_+)^+\right\rangle+\int_{\Omega}(\xi(z)+\vartheta)u^{p-1}(u-\bar{u}_+)^+dz+\int_{\partial\Omega}\beta(z)u^{p-1}(u-\bar{u}_+)^+d\sigma\\
			&&=\int_{\Omega}[f(z,\bar{u}_+)+\vartheta\bar{u}_+^{p-1}](u-\bar{u}_+)^+dz\ (\mbox{see (\ref{eq33})})\\
			&&=\left\langle A(\bar{u}_+),(u-\bar{u}_+)^+\right\rangle+\int_{\Omega}(\xi(z)+\vartheta)\bar{u}_+^{p-1}(u-\bar{u}_+)^+dz+\int_{\partial\Omega}\beta(z)\bar{u}_+^{p-1}(u-\bar{u}_+)^+d\sigma\\
			&&(\mbox{since}\ \bar{u}_+\in S_+),\\
			&\Rightarrow&\left\langle A(u)-A(\bar{u}_+),(u-\bar{u}_+)^+\right\rangle+\int_{\Omega}(\xi(z)+\vartheta)(u^{p-1}-\bar{u}_+^{p-1})(u-\bar{u}_+)^+dz+\\
			&&\hspace{0.5cm}\int_{\partial\Omega}\beta(z)(u^{p-1}-\bar{u}_+^{p-1})(u-\bar{u}_+)^+d\sigma=0,\\
			&\Rightarrow&u\leq\bar{u}_+.
		\end{eqnarray*}
		
		Similarly, if in (\ref{eq34}) we choose $h=(\bar{v}_--u)^+\in W^{1,p}(\Omega)$, then we obtain that
		\begin{eqnarray*}
			&&\bar{v}_-\leq u,\\
			&\Rightarrow&u\in[\bar{v}_-,\bar{u}_+]\cap C^1(\overline{\Omega})\ (\mbox{nonlinear regularity theory}),\\
			&\Rightarrow&K_{\tilde{\varphi}}\subseteq[\bar{v}_-,\bar{u}_+]\cap C^1(\overline{\Omega}).
		\end{eqnarray*}
		
		In a similar fashion we show that
		$$K_{\tilde{\varphi}_+}\subseteq[0,\bar{u}_+]\cap C^1(\overline{\Omega})\ \mbox{and}\ K_{\tilde{\varphi}_-}\subseteq[\bar{v}_-,0]\cap C^1(\overline{\Omega}).$$
		
		The extremality of solutions $\bar{u}_+\in D_+$ and $\bar{v}_-\in-D_+$ implies that
		$$K_{\tilde{\varphi}_+}=\{0,\bar{u}_+\}\ \mbox{and}\ K_{\tilde{\varphi}_-}=\{0,\bar{v}_-\}.$$	
		This proves Claim \ref{cl1}.
		\begin{claim}\label{cl2}
			$\bar{u}_+\in D_+$ and $\bar{v}_-\in -D_+$ are local minimizers of $\tilde{\varphi}$.
		\end{claim}
		
		From (\ref{eq33}) and since $\vartheta>||\xi||_{\infty}$ it is clear that $\tilde{\varphi}_+$ is coercive and sequentially weakly lower semicontinuous. So, we can find $\tilde{u}_+\in W^{1,p}(\Omega)$ such that
		\begin{equation}\label{eq35}
			\tilde{\varphi}_+(\tilde{u}_+)=\inf\{\tilde{\varphi}_+(u):u\in W^{1,p}(\Omega)\}.
		\end{equation}
		
		As before, hypothesis $H(f)_1(ii)$ implies that
		\begin{eqnarray*}
			&&\tilde{\varphi}_+(\tilde{u}_+)<0=\tilde{\varphi}_+(0),\\
			&\Rightarrow&\tilde{u}_+\neq 0\ \mbox{and}\ \tilde{u}_+\in K_{\tilde{\varphi}_+}\ (\mbox{see (\ref{eq35})}),\\
			&\Rightarrow&\tilde{u}_+=\bar{u}_+\ (\mbox{see Claim \ref{cl1}}).
		\end{eqnarray*}
		
		Clearly, $\tilde{\varphi}|_{C_+}=\tilde{\varphi}_+|_{C_+}$ (see (\ref{eq33})). Since $\bar{u}_+\in D_+$, it follows that
		\begin{eqnarray*}
			&&\bar{u}_+\ \mbox{is a local}\ C^1(\overline{\Omega})-\mbox{minimizer of}\ \tilde{\varphi},\\
			&\Rightarrow&\bar{u}_+\ \mbox{is a local}\ W^{1,p}(\Omega)-\mbox{minimizer of}\ \tilde{\varphi}\ (\mbox{see Proposition \ref{prop5}}),
		\end{eqnarray*}
		
		Similarly for $\bar{v}_-\in D_+$, using this time the functional $\tilde{\varphi}_-$.		
		This proves Claim \ref{cl2}.

\smallskip		
		We may assume that $K_{\tilde{\varphi}}$ is finite. Otherwise, on account of Claim \ref{cl1} and due to the extremality of $\bar{u}_+$ and $\bar{v}_-$, we already have an infinity of nodal solutions. In addition, without any loss of generality, we may assume that $\tilde{\varphi}(\bar{v}_-)\leq\tilde{\varphi}(\bar{u}_+)$ (the reasoning is similar if the opposite inequality holds). Claim \ref{cl2} implies that we can find $\rho\in(0,1)$ small such that
		\begin{equation}\label{eq36}
			\tilde{\varphi}(\bar{v}_-)\leq\tilde{\varphi}(\bar{u}_+)<\inf\{\tilde{\varphi}(u):||u-\bar{u}_+||=\rho\}=\tilde{m}_{\rho},\ ||\bar{v}_--\bar{u}_+||>\rho
		\end{equation}
		(see Aizicovici, Papageorgiou and Staicu \cite{1}, proof of Proposition 29).
		
		Evidently, $\tilde{\varphi}$ is coercive (see (\ref{eq33})). Therefore
		\begin{equation}\label{eq37}
			\tilde{\varphi}\ \mbox{satisfies the C-condition}.
		\end{equation}
		
		Then (\ref{eq36}) and (\ref{eq37}) permit the use of the mountain pass theorem (see Theorem \ref{th1}). So, we can find
		\begin{equation}\label{eq38}
			y_0\in K_{\tilde{\varphi}}\ \mbox{and}\ \tilde{m}_{\rho}\leq \tilde{\varphi}(y_0).
		\end{equation}
		
		From (\ref{eq38}), (\ref{eq36}) and Claim \ref{cl1} we have that
		$$y_0\in[\bar{v}_-,\bar{u}_+]\cap C^1(\overline{\Omega})\ \mbox{solves (\ref{eq1}) and}\ y_0\neq\bar{u}_+,\ y_0\neq\bar{v}_-.$$
		
		Since $y_0\in C^1(\overline{\Omega})$ is a critical point of mountain pass type for $\tilde{\varphi}$, we have
		\begin{equation}\label{eq39}
			C_1(\tilde{\varphi},y_0)\neq 0
		\end{equation}
		(see Motreanu, Motreanu and Papageorgiou \cite[Corollary 6.81, p. 168]{13}).
		
		On the other hand, hypothesis $H(f)_1(ii)$ implies that
		\begin{equation}\label{eq40}
			C_k(\tilde{\varphi},0)=0\ \mbox{for all}\ k\in\NN_0
		\end{equation}
		(see Papageorgiou and R\u{a}dulescu \cite[Proposition 6]{17}). Comparing (\ref{eq39}) and (\ref{eq40}) we infer that $y_0\neq 0$. Therefore we conclude that $y_0\in[\bar{v}_-,\bar{u}_+]\cap C^1(\overline{\Omega})$ is a local solution for problem (\ref{eq1}).
\end{proof}

So, we can formulate the following multiplicity theorem for problem (\ref{eq1}).
\begin{theorem}\label{th11}
	If hypotheses $H(a),H(\xi),H(\beta),H(f)_1$ hold, then problem (\ref{eq1}) has at least three nontrivial smooth solutions
	$$u_0\in D_+,v_0\in -D_+,\ \mbox{and}\ y_0\in[v_0,u_0]\cap C^1(\overline{\Omega})\ \mbox{nodal}.$$
\end{theorem}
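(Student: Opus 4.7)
The proof is essentially a consolidation of the two main results developed in Sections 3 and 4, so my plan is to cite them in sequence and then settle the one remaining point, namely that the third solution is genuinely sign changing. First I would invoke Proposition \ref{prop9} to produce the smallest positive solution $u_0 = \bar{u}_+ \in D_+$ and the biggest negative solution $v_0 = \bar{v}_- \in -D_+$ of problem (\ref{eq1}). These supply the first two of the three claimed solutions, with the sign information $u_0 \in D_+$ and $v_0 \in -D_+$ built into the conclusion of Proposition \ref{prop9} via the nonlinear strong maximum principle.

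Next I would apply Proposition \ref{prop10} to the pair $(\bar{v}_-, \bar{u}_+)$ produced above. That proposition yields a solution $y_0 \in [\bar{v}_-, \bar{u}_+] \cap C^1(\overline{\Omega})$ of problem (\ref{eq1}) satisfying $y_0 \notin \{\bar{v}_-, \bar{u}_+\}$ (this comes from the mountain pass construction, giving $\tilde{m}_\rho \le \tilde{\varphi}(y_0)$ with $\rho$ separating $y_0$ from the two minimizers) and $y_0 \neq 0$ (this comes from comparing the mountain-pass critical group $C_1(\tilde{\varphi}, y_0) \neq 0$ with the vanishing critical groups at the origin induced by hypothesis $H(f)_1(ii)$).

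The last point to verify is that this $y_0$ is actually nodal, rather than merely nontrivial and trapped in $[\bar{v}_-, \bar{u}_+]$. This is where the extremality of $\bar{u}_+$ and $\bar{v}_-$ is essential. If we had $y_0 \ge 0$ on $\Omega$, then since $0 \le y_0 \le \bar{u}_+ \le \eta$, the function $y_0$ would be a positive solution of (\ref{eq1}) lying in the order interval $[0, \eta]$, hence $y_0 \in S_+$; but then $\bar{u}_+ = \inf S_+$ together with $y_0 \le \bar{u}_+$ would force $y_0 = \bar{u}_+$, contradicting the previous step. The symmetric argument using $\bar{v}_- = \sup S_-$ rules out the possibility $y_0 \le 0$. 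Consequently $y_0$ must change sign, completing the proof.

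I do not anticipate any real obstacle here, since all of the analytic work (variational setup, the auxiliary problem yielding $\tilde u, \tilde v$, the extremality construction, the truncation and mountain pass argument together with the critical group computation at zero) has already been carried out in Propositions \ref{prop6}--\ref{prop10}; Theorem \ref{th11} is the bookkeeping statement that packages those results, and the only nontrivial verification is the extremality-based sign-changing argument described above.
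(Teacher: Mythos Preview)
Your proposal is correct and matches the paper's approach: in the paper, Theorem \ref{th11} is stated without a separate proof precisely because it is the summary of Propositions \ref{prop7}--\ref{prop10}, and your choice $u_0=\bar{u}_+$, $v_0=\bar{v}_-$ makes the inclusion $y_0\in[v_0,u_0]$ immediate. Your final paragraph re-deriving nodality is redundant, since Proposition \ref{prop10} already asserts that $y_0$ is nodal (the extremality argument you give is exactly the mechanism used inside that proposition), but it does no harm.
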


\section{Semilinear Equations}

In this section, we introduce a special case of problem (\ref{eq1}) in which $a(y)=y$ for all $y\in \RR^N$ (that is, the differential operator is the Laplacian, which corresponds to a semilinear equation). So, the problem under consideration is the following
\begin{equation}\label{eq41}
	\left\{\begin{array}{l}
		-\Delta u(z)+\xi(z)u(z)=f(z,u(z))\ \mbox{in}\ \Omega,\\
		\frac{\partial u}{\partial n}+\beta(z)u=0\ \mbox{on}\ \partial\Omega.
	\end{array}\right\}
\end{equation}

In this case we can also relax the conditions on the potential function $\xi(\cdot)$ and allow it to be unbounded. For problem (\ref{eq41}) we were able to improve Theorem \ref{th11} and produce a second nodal solution for a local of four nontrivial smooth solutions.

Now the hypotheses on the data of (\ref{eq41}) are the following:

\smallskip
$\bullet\ $ $H(\xi)':$ $\xi\in L^s(\Omega)$ with $s>N,\ \xi^+\in L^{\infty}(\Omega)$;

\smallskip
$\bullet\ $ $H(\beta)':$ $\beta\in W^{1,\infty}(\Omega),\ \beta(z)\geq 0$ for all $z\in\partial\Omega$.
\begin{remark}
	Again we can have $\beta\equiv 0$, which corresponds to the Neumann problem.
\end{remark}

$H(f)_2:$ the function $f:\Omega\times\RR\rightarrow\RR$ is  measurable and for almost all $z\in\Omega$, $f(z,0)=0,\ f(z,\cdot)\in C^1(\RR)$ and
\begin{itemize}
	\item[(i)] there exist $\eta>0$ and $a_{\eta}\in L^{\infty}(\Omega)_+$ such that
	$$|f(z,x)|\leq a_{\eta}(z)\ \mbox{for almost all}\ z\in\Omega,\ \mbox{and all}\ x\in[-\eta,\eta];$$
	\item[(ii)] there exist $m\in\NN,\ m\geq 2$ and $\delta_0>0$ such
	$$\hat{\lambda}_mx^2\leq f(z,x)x\leq\hat{\lambda}_{m+1}x^2\ \mbox{for almost all}\ z\in\Omega,\ \mbox{and all}\ |x|\leq\delta_0;$$
	\item[(iii)] with $\eta>0$ as in (i) we have
	$$f(z,\eta)-\xi(z)\eta\leq 0\leq f(z,-\eta)+\xi(z)\eta\ \mbox{for almost all}\ z\in\Omega$$
	and there exists $\hat{\xi}_{\eta}>0$ such that for almost all $z\in\Omega$
	$$x\mapsto f(z,x)+\hat{\xi}_{\eta}x$$
	is nondecreasing on the interval $[-\eta,\eta]$.
\end{itemize}

We have the following multiplicity theorem for problem (\ref{eq41}).
\begin{theorem}\label{th12}
	If hypotheses $H(\xi)',H(\beta)',H(f)_2$ hold, then problem (\ref{eq41}) has at least four nontrivial smooth solutions
	$$u_0\in D_+,\ v_0\in-D_+,\ \mbox{and}\ y_0,\, \hat{y}\in int_{C^1(\overline{\Omega})}[v_0,u_0]\ \mbox{nodal}.$$
\end{theorem}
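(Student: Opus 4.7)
The plan is first to obtain three nontrivial smooth solutions $u_0 \in D_+$, $v_0 \in -D_+$, and a nodal $y_0 \in [v_0,u_0]\cap C^1(\overline\Omega)$ by applying Theorem \ref{th11} to the semilinear problem (\ref{eq41}), noting that the double-eigenvalue sandwich in $H(f)_2(ii)$ implies condition $H(f)_1(ii)$ with $\tau = 2$ and some $q \in (1,2)$ while the remaining parts of $H(f)_1$ are inherited directly. To produce the fourth solution I will work with the truncated functional $\tilde\varphi$ of the proof of Proposition \ref{prop10}, which in the present semilinear setting is of class $C^2$ on $H^1(\Omega)$ and coercive, and apply Morse theory.

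Before the Morse step I would promote $y_0$ to a strict $C^1$-interior point of $[v_0,u_0]$. Using the monotonicity supplied by $H(f)_2(iii)$, the difference $w = u_0 - y_0 \ge 0$ satisfies
$$-\Delta w + (\xi(z) + \hat\xi_\eta)\,w \ge 0 \quad \text{in } \Omega, \qquad \frac{\partial w}{\partial n} + \beta(z)\,w = 0 \quad \text{on } \partial\Omega.$$
Since $\xi + \hat\xi_\eta \in L^s(\Omega)$ with $s > N$, the Pucci--Serrin strong maximum principle together with its boundary version force $u_0 - y_0 \in D_+$, and symmetrically $y_0 - v_0 \in D_+$. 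The same argument will subsequently place the fourth solution in $\text{int}_{C^1(\overline\Omega)}[v_0,u_0]$.

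The crux is the following catalogue of critical groups, under the tentative assumption $K_{\tilde\varphi} = \{0, v_0, u_0, y_0\}$:
\begin{itemize}
\item $C_k(\tilde\varphi, v_0) = C_k(\tilde\varphi, u_0) = \delta_{k,0}\ZZ$, since both are local $H^1(\Omega)$-minimizers (Claim \ref{cl2});
\item $C_k(\tilde\varphi, y_0) = \delta_{k,1}\ZZ$: the mountain-pass structure yields $C_1(\tilde\varphi, y_0) \ne 0$ by Motreanu--Motreanu--Papageorgiou \cite{13} (Corollary 6.81), and a $C^2$ Gromoll--Meyer shifting combined with the Morse-index bound $m(y_0) \le 1$ forces the single-jump form;
\item $C_k(\tilde\varphi, \infty) = \delta_{k,0}\ZZ$, from the coercivity of $\tilde\varphi$;
\item $C_k(\tilde\varphi, 0) = \delta_{k, d_m}\ZZ$, where $d_m = \dim \bigoplus_{j=1}^{m} E(\hat\lambda_j) \ge m \ge 2$. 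Near the origin the truncation is transparent, so $\tilde\varphi(u) = \tfrac{1}{2}\hat\gamma(u) - \int_\Omega F(z,u)\,dz$, and $H(f)_2(ii)$ gives $\tfrac{\hat\lambda_m}{2}x^2 \le F(z,x) \le \tfrac{\hat\lambda_{m+1}}{2}x^2$ for $|x|\le\delta_0$. Decomposing $H^1(\Omega) = H_- \oplus H_+$ with $H_- = \bigoplus_{j\le m} E(\hat\lambda_j)$ and $H_+ = \overline{\bigoplus_{j\ge m+1} E(\hat\lambda_j)}$, the variational identities (\ref{eq6})--(\ref{eq7}) together with the UCP of each $E(\hat\lambda_j)$ give strict definiteness of $\hat\gamma - \hat\lambda_m\|\cdot\|_2^2$ on $H_+$ near $0$ and of $\hat\lambda_{m+1}\|\cdot\|_2^2 - \hat\gamma$ on $H_-$, from which a standard shifting/deformation argument extracts the claimed concentration.
\end{itemize}

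Plugging these into the Morse identity (\ref{eq8}) and evaluating at $t = -1$ annihilates the $(1+t)Q(t)$ term, yielding
$$1 + 1 + (-1) + (-1)^{d_m} = 1,$$
i.e.\ $(-1)^{d_m} = 0$, a contradiction. Hence $K_{\tilde\varphi}$ contains an additional point $\hat y$; by Claim \ref{cl1} we have $\hat y \in [v_0,u_0]\cap C^1(\overline\Omega)$, and the extremality of $v_0, u_0$ (Proposition \ref{prop9}) rules out constant sign, so $\hat y$ is nodal. A final use of the maximum-principle argument above places $\hat y \in \text{int}_{C^1(\overline\Omega)}[v_0, u_0]$.

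The main obstacle I anticipate is the rigorous computation $C_k(\tilde\varphi, 0) = \delta_{k,d_m}\ZZ$ at the double resonance: when $f(z,\cdot)/\cdot$ meets $\hat\lambda_m$ or $\hat\lambda_{m+1}$ on a set of positive measure the quadratic forms fail to be strictly definite on the nose, and one must recover definiteness via a compactness-plus-subsequence argument leveraging the UCP of the eigenspaces through (\ref{eq6})--(\ref{eq7}).
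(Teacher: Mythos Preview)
Your proposal contains a genuine gap at the very first step: the claim that $H(f)_2(ii)$ implies $H(f)_1(ii)$ with $\tau=2$ and some $q\in(1,2)$ is false. The inequality $\eta_0|x|^q\le f(z,x)x$ for $q<2$ is a \emph{stronger} lower bound near zero than $\hat\lambda_m x^2$, not a weaker one, since $|x|^q\gg |x|^2$ as $x\to 0$. Concretely, $f(z,x)=\lambda x$ with $\lambda\in[\hat\lambda_m,\hat\lambda_{m+1}]$ satisfies $H(f)_2(ii)$ but gives $f(z,x)x=\lambda x^2$, which cannot dominate $\eta_0|x|^q$ for any $q<2$; the same example also violates the second inequality in $H(f)_1(ii)$, since $f(z,x)x=2F(z,x)$ forces $q\ge 2$. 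The paper explicitly flags this: ``Here, $f(z,\cdot)$ is linear near zero, while hypothesis $H(f)_1(ii)$ implies the presence of a concave nonlinearity near zero.'' Consequently you cannot invoke Theorem~\ref{th11} off the shelf. The existence of constant sign solutions (Propositions~\ref{prop6}--\ref{prop9}) must be redone with a different mechanism for nontriviality: instead of using $q<\tau$ to make $\hat\varphi_+(tu)<0$ for $u\in D_+$ and $t$ small, the paper tests against $t\hat u_1$ and uses $m\ge 2$ (hence $\hat\lambda_m>\hat\lambda_1$) to get $\hat\varphi_+(t\hat u_1)<0$. The auxiliary problem (\ref{eq11}) also has to be rebuilt around $\mu(z,x)=\hat\lambda_m x-\vartheta_0|x|^{r-2}x$ rather than a concave term.

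Once the three-solution scaffolding is in place, your Morse-theoretic endgame is essentially the paper's, with one methodological difference worth noting: for $C_k(\tilde\varphi,0)=\delta_{k,d_m}\ZZ$ the paper does not use a direct decomposition/UCP argument at double resonance (your anticipated obstacle), but instead applies homotopy invariance of critical groups along $h(t,u)=(1-t)\tilde\varphi(u)+t\psi(u)$ with $\psi(u)=\tfrac12\tau_0(u)-\tfrac{\lambda}{2}\|u\|_2^2$ and $\lambda\in(\hat\lambda_m,\hat\lambda_{m+1})$, reducing to a nondegenerate quadratic for which the critical groups are immediate. This sidesteps the resonance difficulty entirely and is cleaner than the route you sketch. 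Your computation of $C_k(\tilde\varphi,y_0)=\delta_{k,1}\ZZ$ and the final Morse-relation contradiction match the paper.
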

\begin{proof}
	Since we have relaxed the conditions on the potential function $\xi(\cdot)$ and on the boundary coefficient $\beta(\cdot)$, we need to show how the solutions of problem (\ref{eq1}) exhibit the global (that is, up to the boundary) regularity properties claimed by the theorem.
	
	So, let $u\in[-\eta,\eta]$ be a nontrivial solution of (\ref{eq41}). Then
	\begin{eqnarray}\label{eq42}
		&&\left\langle A(u),h\right\rangle+\int_{\Omega}\xi(z)uhdz+\int_{\partial\Omega}\beta(z)uhd\sigma=\int_{\Omega}f(z,u)hdz\ \mbox{for all}\ h\in H^1(\Omega)\nonumber\\
		&\Rightarrow&-\Delta u(z)+\xi(z)u(z)=f(z,u(z))\ \mbox{for almost all}\ z\in\Omega,\ \frac{\partial u}{\partial n}+\beta(z)u=0\ \mbox{on}\ \partial\Omega\\
		&&(\mbox{see Papageorgiou and R\u{a}dulescu \cite{16}})\nonumber
	\end{eqnarray}
	
	Let
	\begin{eqnarray*}
		g_1(z)=\left\{\begin{array}{ll}
			0&\mbox{if}\ |u(z)|\leq 1\\
			\frac{f(z,u(z))}{u(z)}&\mbox{if}\ |u(z)|>1
		\end{array}\right.\ \mbox{and}\ g_2(z)=\left\{\begin{array}{ll}
			f(z,u(z))&\mbox{if}\ |u(z)|\leq 1\\
			0&\mbox{if}\ |u(z)|>1.
		\end{array}\right.
	\end{eqnarray*}
	
	Note that hypotheses $H(f)_2(i),(ii)$ imply that
	$$|f(z,x)|\leq c_{12}|x|\ \mbox{for almost all}\ z\in\Omega,\ \mbox{all}\ x\in[-\eta,\eta],\ \mbox{some}\ c_{12}>0.$$
	
	Then it follows that $g_1,g_2\in L^{\infty}(\Omega)$. We rewrite (\ref{eq42}) as follows:
	$$\left\{\begin{array}{l}
		-\Delta u(z)=(g_1(z)-\xi(z))u(z)+g_2(z)\ \mbox{for almost all}\ z\in\Omega,\\
		\frac{\partial u}{\partial n}+\beta(z)u=0\ \mbox{on}\ \partial\Omega.
	\end{array}\right\}$$
	
	Note that $g_1-\xi\in L^s(\Omega)\ s>N$ (see hypothesis $H(\xi)'$). Invoking Lemma 5.1 of Wang \cite{27}, we have $u\in L^{\infty}(\Omega)$. Then the Calderon-Zygmund estimates (see Wang \cite[Lemma 5.2]{27}), imply that
	\begin{eqnarray*}
		&&u\in W^{2,s}(\Omega),\\
		&\Rightarrow&u\in C^{1,\alpha}(\overline{\Omega})\ \mbox{with}\ \alpha=1-\frac{N}{s}>0\\
		&&(\mbox{by the Sobolev embedding theorem}).
	\end{eqnarray*}
	
	Now the condition near zero (hypothesis $H(f)_2(ii)$) is different. Here, $f(z,\cdot)$ is linear near zero, while hypothesis $H(f)_1(ii)$ implies the presence of a concave nonlinearity near zero. So, we need to verify that Theorem \ref{th11} remains valid also in the present setting. Note that now, given $r>2$, we can find $\vartheta_0=\vartheta_0(r)>0$ such that
	$$f(z,x)x\geq\hat{\lambda}_mx^2-\vartheta_0|x|^r\ \mbox{for almost all}\ z\in\Omega,\ \mbox{and all}\ |x|\leq\eta.$$
	
	We introduce the following Carath\'eodory function
	\begin{equation}\label{eq43}
		\mu(z,x)=\left\{\begin{array}{ll}
			-\hat{\lambda}_m\eta+\vartheta_0\eta^{r-1}&\mbox{if}\ x<-\eta\\
			\hat{\lambda}_mx-\vartheta_0|x|^{r-2}x&\mbox{if}\ -\eta\leq x\leq\eta\\
			\hat{\lambda}_m\eta-\vartheta_0\eta^{r-1}&\mbox{if}\ \eta<x
		\end{array}\right.
	\end{equation}
	and then consider the following auxiliary Robin problem
	\begin{equation}\label{eq44}
		\left\{\begin{array}{l}
			-\Delta u(z)+\xi(z)u(z)=\mu(z,u(z))\ \mbox{in}\ \Omega,\\
			\frac{\partial u}{\partial n}+\beta(z)u=0\ \mbox{on}\ \partial\Omega
		\end{array}\right\}
	\end{equation}
	(see (\ref{eq10}) and (\ref{eq11}) for the corresponding items in the previous setting). Reasoning as in the proof of Proposition \ref{prop6}, we obtain a unique positive solution
	$$\tilde{u}\in[0,\eta]\cap D_+$$
	for problem (\ref{eq44}). Then $\tilde{v}=-\tilde{u}\in[-\eta,0]\cap(-D_+)$ is the unique negative solution of (\ref{eq44}).
	
	In fact in this case, due to the semilinearity of the problem, we can have an alternative more direct proof of the uniqueness of the positive solution of problem (\ref{eq44}). So, suppose that $\tilde{u},\hat{u}$ are two positive solutions of (\ref{eq44}). We have
	\begin{equation}\label{eq45}
		\tilde{u},\hat{u}\in[0,\eta]\cap D_+.
	\end{equation}
	
	Let $t^*>0$ be the biggest real number such that
	\begin{equation}\label{eq46}
		t^*\hat{u}\leq\tilde{u}.
	\end{equation}
	
	Assume that $0<t^*<1$. Evidently, we can find $\tilde{\xi}_{\eta}>0$ such that the function
	$$x\mapsto (\hat{\lambda}_m+\tilde{\xi}_{\eta})x-\vartheta_0|x|^{r-2}x$$
	is nondecreasing on $[-\eta,\eta]$. We have
	\begin{eqnarray*}
		&&-\Delta\tilde{u}(z)+(\xi(z)+\tilde{\xi}_{\eta})\tilde{u}(z)\\
		&=&\mu(z,\tilde{u}(z))+\tilde{\xi}_{\eta}\tilde{u}(z)\\
		&=&(\hat{\lambda}_m+\tilde{\xi}_{\eta})\tilde{u}(z)-\vartheta_0\tilde{u}(z)^{r-1}\ (\mbox{see (\ref{eq45}) and (\ref{eq43})})\\
		&\geq&(\hat{\lambda}_m+\tilde{\xi}_{\eta})(t^*\hat{u}(z))-\vartheta_0(t^*\hat{u}(z))^{r-1}\ (\mbox{see (\ref{eq46})})\\
		&\geq&t^*[\hat{\lambda}_m\hat{u}(z)-\vartheta_0\hat{u}(z)^{r-1}+\tilde{\xi}_{\eta}\hat{u}(z)]\ (\mbox{since}\ 0<t^*<1,r>2)\\
		&=&-\Delta(t^*\hat{u}(z))+(\xi(z)+\tilde{\xi}_{\eta})(t^*\hat{u}(z))\ \mbox{for almost all}\ z\in\Omega,\\
		\Rightarrow&&\Delta(\tilde{u}-t^*\hat{u})(z)\leq[||\xi^+||_{\infty}+\tilde{\xi}_{\eta}](\tilde{u}-t^*\hat{u})(z)\ \mbox{for almost all}\ z\in\Omega\\
		&&(\mbox{see hypothesis}\ H(\xi))\\
		\Rightarrow&&\tilde{u}-t^*\hat{u}\in D_+\ (\mbox{by the strong maximum principle}).
	\end{eqnarray*}
	
	This contradicts the maximality of $t^*$. Hence $t^*\geq 1$ and so
	$$\hat{u}\leq\tilde{u}\ (\mbox{see (\ref{eq46})}).$$
	
	Interchanging the roles of $\tilde{u}$ and $\hat{u}$ in the above argument, we also have
	\begin{eqnarray*}
		&&\tilde{u}\leq\hat{u},\\
		&\Rightarrow&\tilde{u}=\hat{u}.
	\end{eqnarray*}
	
	This is an alternative, more direct proof of the uniqueness of positive and negative solutions of problem (\ref{eq44}).
	
	As in the proof of Proposition \ref{prop7}, we introduce the Carath\'eodory function
	$$\hat{f}_+(z,x)=\left\{\begin{array}{ll}
		0&\mbox{if}\ x<0\\
		f(z,x)+\mu_0x&\mbox{if}\ 0\leq x\leq \eta\\
		f(z,\eta)+\mu_0\eta&\mbox{if}\ \eta<x
	\end{array}\right.$$
	with $\mu_0>||\xi^+||_{\infty}$ (see hypothesis $H(\xi)'$).
	
	We set $\hat{F}_+(z,x)=\int^x_0\hat{f}_+(z,s)ds$ and consider the $C^1$-functional $\tilde{\varphi}_+:H^1(\Omega)\rightarrow\RR$ defined by
	$$\hat{\varphi}_+(u)=\frac{1}{2}\tau_0(u)+\frac{\mu_0}{2}||u||^2_2-\int_{\Omega}\hat{F}_+(z,u)dz\ \mbox{for all}\ u\in H^1(\Omega),$$
	with $\tau_0(u)=||Du||^2_2+\int_{\Omega}\xi(z)u^2dz+\int_{\partial\Omega}\beta(z)u^2d\sigma$ for all $u\in H^1(\Omega)$. Using the direct method of the calculus of variations, we obtain $u_0\in H^1(\Omega)$ such that
	$$\hat{\varphi}_+(u_0)=\inf\{\hat{\varphi}_+(u):u\in H^1(\Omega)\}.$$
	
	Since $m\geq 2$ (see hypothesis $H(f)_2(ii)$), we have
	\begin{eqnarray*}
		&&\hat{\varphi}_+(u_0)<0=\hat{\varphi}_+(0),\\
		&\Rightarrow&u_0\neq 0.
	\end{eqnarray*}
	
	As in the proof of Proposition \ref{prop7}, we show that
	$$u_0\in[0,\eta]\cap D_+.$$
	
	Similarly, using the Carath\'eodory function
	$$\hat{f}_-(z,x)=\left\{\begin{array}{ll}
		f(z,-\eta)-\mu_0\eta&\mbox{if}\ x<-\eta\\
		f(z,x)+\mu_0x&\mbox{if}\ -\eta\leq x\leq 0\\
		0&\mbox{if}\ 0<x,
	\end{array}\right.$$
	we produce a negative solution
	$$v_0\in[-\eta,0]\cap(-D_+).$$
	
	Therefore we have
	$$\emptyset\neq S_+\subseteq D_+\ \mbox{and}\ \emptyset\neq S_-\subseteq-D_+.$$
	
	In addition, as in the proof of Proposition \ref{prop8}, we show that
	$$\tilde{u}\leq u\ \mbox{for all}\ u\in S_+\ \mbox{and}\ v\leq\tilde{v}\ \mbox{for all}\ v\in S_-.$$
	
	Moreover, reasoning as in the proof of Proposition \ref{prop9}, we produce extremal constant sign solution for problem (\ref{eq41})
	$$\bar{u}_+\in D_+\ \mbox{and}\ \bar{v}_-\in-D_+.$$
	
As in the proof of Proposition \ref{prop10},	using these two extremal constant sign solutions,  we introduce the functional $\tilde{\varphi}$ and using it we produce a nodal solution. Note that in (\ref{eq33}) we replace $\vartheta>0$ by $\mu_0>0$ and we set $p=2$. Claims \ref{cl1} and \ref{cl2} in the proof of Proposition \ref{prop10} remain valid (as before, since $m\geq 2$, we have $\tilde{\varphi}_+(\tilde{u}_+)<0=\tilde{\varphi}_+(0)$ and so $\tilde{u}_+\neq 0$). Finally, we apply the mountain pass theorem (see Theorem \ref{th1}) and obtain
	\begin{equation}\label{eq47}
		y_0\in K_{\tilde{\varphi}}\subseteq[\bar{v}_-,\bar{u}_+]\cap C^1(\overline{\Omega}),\ y_0\notin\{\bar{v}_-,\bar{u}_+\}.
	\end{equation}
	
	Therefore we have
	\begin{equation}\label{eq48}
		C_1(\tilde{\varphi},y_0)\neq 0
	\end{equation}
	(see Motreanu, Motreanu and Papageorgiou \cite[Corollary 6.81, p. 168]{13}). In this case, since the condition near zero is different (see $H(f)_1(ii)$ and compare it with $H(f)_1(ii)$), relation (\ref{eq40}) is no longer true. We need to compute the critical groups of $\tilde{\varphi}$ at $u=0$.
	\begin{claim}\label{claim3}
	We have	$C_k(\tilde{\varphi},0)=\delta_{k,d_m}\ZZ$ for all $k\in\NN_0$, with $d_m=dim\overset{m}{\underset{\mathrm{k=1}}\oplus}E(\hat{\lambda}_k)\geq 2$.
	\end{claim}
	
	Let $\lambda\in(\hat{\lambda}_m,\hat{\lambda}_{m+1})$ and consider the $C^2$-functional $\psi:H^1(\Omega)\rightarrow\RR$ defined by
	$$\psi(u)=\frac{1}{2}\tau_0(u)-\frac{\lambda}{2}||u||^2_2\ \mbox{for all}\ u\in H^1(\Omega).$$
	
	We consider the homotopy
	$$h(t,u)=(1-t)\tilde{\varphi}(u)+t\psi(u)\ \mbox{for all}\ t\in[0,1],\ \mbox{all}\ u\in H^1(\Omega).$$
	
	Suppose that we can find $\{t_n\}_{n\geq 1}\subseteq[0,1]$ and $\{u_n\}_{n\geq 1}\subseteq H^1(\Omega)$ such that
	\begin{equation}\label{eq49}
		t_n\mapsto t,u_n\rightarrow 0\ \mbox{in}\ H^1(\Omega)\ \mbox{and}\ h'_u(t_n,u_n)=0\ \mbox{for all}\ n\in\NN.
	\end{equation}
	
	From the equation in (\ref{eq49}), we have
	\begin{equation}\label{eq50}
		\left\{\begin{array}{l}
			-\Delta u_n(z)+\xi(z)u_n(z)-(1-t_n)u^-_n(z)=(1-t_n)f(z,u_n(z))+\lambda t_nu_n(z)\ \mbox{in}\ \Omega,\\
			\frac{\partial u_n}{\partial n}+\beta(z)u_n=0\ \mbox{on}\ \partial\Omega\,.
		\end{array}\right\}
	\end{equation}
	
	From (\ref{eq50}) and the regularity theory of Wang \cite{27}, we know that we can find $\alpha\in(0,1)$ and $c_{13}>0$ such that
	\begin{equation}\label{eq51}
		u_n\in C^{1,\alpha}(\overline{\Omega})\ \mbox{and}\ ||u_n||_{C^{1,\alpha}(\overline{\Omega})}\leq c_{13}\ \mbox{for all}\ n\in\NN.
	\end{equation}
	
	Exploiting the compact embedding of $C^{1,\alpha}(\overline{\Omega})$ into $C^1(\overline{\Omega})$,  we infer from (\ref{eq51}) and (\ref{eq49}) that
	\begin{eqnarray*}
		&&u_n\rightarrow 0\ \mbox{in}\ C^1(\overline{\Omega}),\\
		&\Rightarrow&u_n\in[\bar{v}_-,\bar{u}_+]\cap C^1(\overline{\Omega})\ \mbox{for all}\ n\geq n_0,\\
		&\Rightarrow&\{u_n\}_{n\geq n_0}\subseteq K_{\tilde{\varphi}}\ (\mbox{see Claim \ref{cl1} in the proof of Proposition \ref{prop10}}).
	\end{eqnarray*}
	
	But we have assumed that $K_{\tilde{\varphi}}$ is finite (otherwise, on account of the definition of $\tilde{\varphi}$ and (\ref{eq33}) with $\mu_0>0$ replacing $\vartheta>0$ and $p=2$ and using Claim \ref{cl1} in the proof of Proposition \ref{prop10}, we see that we have an infinity of nodal solutions of (\ref{eq41}) and so we are done). Therefore we have a contradiction and this means that (\ref{eq49}) cannot happen. Then using the homotopy invariance property of critical groups (see Gasinski and Papageorgiou \cite[Theorem 5.125, p. 836]{7}), we have
	\begin{equation}\label{eq52}
		C_k(\tilde{\varphi},0)=C_k(\psi,0)\ \mbox{for all}\ k\in\NN_0.
	\end{equation}
	
	Note that $\psi\in C^2(H^1(\Omega))$. Since $\lambda\in(\hat{\lambda}_m,\hat{\lambda}_{m+1}),u=0$ is a nondegenerate critical point of $\psi$. So, from Gasinski and Papageorgiou \cite[Theorem 5.106, p. 832]{7}, we have
	\begin{eqnarray*}	
		&&C_k(\psi,0)=\delta_{k,d_m}\ZZ\ \mbox{for all}\ k\in\NN_0,\\
		&\Rightarrow&C_k(\tilde{\varphi},0)=\delta_{k,d_m}\ZZ\ \mbox{for all}\ k\in\NN_0\ (\mbox{see (\ref{eq52})}).
	\end{eqnarray*}
	
	This proves Claim \ref{claim3}.
	
	From (\ref{eq48}), (\ref{eq47}) and Claim \ref{claim3}, we infer that
	\begin{eqnarray*}
		&&y_0\notin\{0,\bar{u}_+,\bar{v}_-\}\\
		&\Rightarrow&y_0\ \mbox{is a nodal solution of (\ref{eq41}) and $y_0\in[\bar{v}_-,\bar{u}_+]\cap C^1(\overline{\Omega})$}.
	\end{eqnarray*}
	
	Let $\hat{\xi}_{\eta}>0$ be as postulated by hypothesis $H(f)_2(iii)$. Then
	\begin{eqnarray*}
		&&-\Delta y_0(z)+(\xi(z)+\hat{\xi}_{\eta})y_0(z)\\
		&&=f(z,y_0(z))+\hat{\xi}_{\eta}y_0(z)\\
		&&\leq f(z,\bar{u}_+(z))+\hat{\xi}_{\eta}\bar{u}_+(z)\ (\mbox{see (\ref{eq47}) and hypothesis}\ H(f)_2(iii))\\
		&&=-\Delta\bar{u}_+(z)+(\xi(z)+\hat{\xi}_{\eta})\bar{u}_+(z)\ \mbox{for almost all}\ z\in\Omega,\\
		&\Rightarrow&\Delta(\bar{u}_+-y_0)(z)\leq[||\xi^+||_{\infty}+\hat{\xi}_{\eta}](\bar{u}_+-y_0)(z)\ \mbox{for almost all}\ z\in\Omega\\
		&&(\mbox{see hypothesis}\ H(\xi)')\\
		&\Rightarrow&\bar{u}_+-y_0\in D_+\ (\mbox{by the strong maximum principle}).
	\end{eqnarray*}
	
	Similarly we show that
	$$y_0-\bar{v}_-\in D_+.$$
	
	Therefore we conclude that
	\begin{equation}\label{eq53}
		y_0\in {\rm int}_{C^1(\overline{\Omega})}[\bar{v}_-,\bar{u}_+].
	\end{equation}
	
	From (\ref{eq48}), (\ref{eq53}) and Proposition 5.124 of Gasinski and Papageorgiou \cite[p. 836]{7}, we have
	\begin{equation}\label{eq54}
		C_k(\tilde{\varphi},y_0)=\delta_{k,1}\ZZ\ \mbox{for all}\ k\in\NN_0.
	\end{equation}
	
	Recall that $\bar{u}_+\in D_+$ and $\bar{v}_-\in-D_+$ are local minimizers of $\tilde{\varphi}$ (see Claim \ref{cl2}) in the proof of Proposition \ref{prop10}. Therefore we have
	\begin{equation}\label{eq55}
		C_k(\tilde{\varphi},\bar{u}_+)=C_k(\tilde{\varphi},\bar{v}_-)=\delta_{k,0}\ZZ\ \mbox{for all}\ k\in\NN_0.
	\end{equation}
	
	From the proof of Claim \ref{claim3}, we have
	\begin{equation}\label{eq56}
		C_k(\tilde{\varphi},0)=\delta_{k,d_m}\ZZ\ \mbox{for all}\ k\in\NN_0.
	\end{equation}
	
	Finally, since $\tilde{\varphi}$ is coercive, we have
	\begin{equation}\label{eq57}
		C_k(\tilde{\varphi},\infty)=\delta_{k,0}\ZZ\ \mbox{for all}\ k\in\NN_0.
	\end{equation}
	
	Suppose that $K_{\tilde{\varphi}}=\{0,\bar{u}_+,\bar{v}_-,y_0\}$. Then from (\ref{eq54}), (\ref{eq55}), (\ref{eq56}), (\ref{eq57}), and the Morse relation with $t=-1$ (see (\ref{eq8})), we have
	\begin{eqnarray*}
		&&(-1)^{d_m}+2(-1)^0+(-1)^1=(-1)^0,\\
		&\Rightarrow&(-1)^{d_m}=0,\ \mbox{a contradiction}.
	\end{eqnarray*}
	
	So, there exists $\hat{y}\in H^1(\Omega)$ such that
	\begin{eqnarray*}
		&&\hat{y}\in K_{\tilde{\varphi}},\ \hat{y}\notin\{0,\bar{u}_+,\bar{v}_-,y_0\},\\
		&\Rightarrow&\hat{y}\in[\bar{v}_-,\ \bar{u}_+]\cap C^1(\overline{\Omega}),\hat{y}\notin\{0,\bar{u}_+,\bar{v}_-,y_0\},\\
		&\Rightarrow&\hat{y}\ \mbox{is a second solution of (\ref{eq41}) distinct from}\ y_0.
	\end{eqnarray*}
	
	Moreover, as we did for $y_0$, using hypothesis $H(f)_2(iii)$, we show that
	$$\hat{y}\in {\rm int}_{C^1(\overline{\Omega})}[\bar{v}_-,\bar{u}_+].$$
The proof is now complete.
\end{proof}

\section{Infinitely Many Nodal Solutions}

In this section we return to problem (\ref{eq1}) and by introducing a symmetry condition on $f(z,\cdot)$, we produce a whole sequence of distinct nodal solutions for problem (\ref{eq1}).

The hypotheses on the reaction term are the following:

\smallskip
$H(f)_3:$ $f:\Omega\times\RR\rightarrow\RR$ is a Carath\'eodory function such that for almost all $z\in\Omega$, $f(z,0)=0$ and
\begin{itemize}
	\item[(i)] there exist $\eta>0$ and $a_{\eta}\in L^{\infty}(\Omega)_+$ such that for almost all $z\in\Omega$, $f(z,\cdot)|_{[-\eta,\eta]}$ is odd and
	\begin{equation*}
		|f(z,x)|\leq a_{\eta}(z)\ \mbox{for almost all}\ z\in\Omega,\ \mbox{and all}\ |x|\leq\eta;
	\end{equation*}
	\item[(ii)] with $\tau\in(1,p)$ as in hypothesis $H(a)(iv)$ we have
	$$\lim\limits_{x\rightarrow 0}\frac{f(z,x)}{|x|^{\tau-2}x}=+\infty\ \mbox{uniformly for almost all}\ z\in\Omega.$$
\end{itemize}
\begin{remark}
	The symmetric condition on $f(z,\cdot)$, permits the relaxation of the condition near zero (compare $H(f)_3(ii)$ with $H(f)_1(ii)$). We have also  dropped hypothesis $H(f)_3(iii)$.
\end{remark}

Fix $\lambda(\cdot)$ an even continuous function such that
\begin{itemize}
	\item $\lambda|_{[-c,c]}\equiv 1$ for some $c\in(0,\eta)$;
	\item ${\rm supp}\,\lambda\subseteq(-\eta,\eta)$;
	\item $0\leq\lambda\leq 1$.
\end{itemize}

Let $\hat{f}(z,x)=\lambda(x)f(z,x)+(1-\lambda(x))\xi(z)|x|^{p-2}x$. Evidently, $\hat{f}(z,x)$ is a Carath\'eodory function with the following properties:
\begin{itemize}
	\item for all $z\in\Omega,\ \hat{f}(z,\cdot)$ is odd;
	\item $\hat{f}(z,x)=f(z,x)$ for all $z\in\Omega$,  $|x|\leq c$;
	\item $\hat{f}(z,x)=\xi(z)|x|^{p-2}x$ for all $z\in\Omega$,  $|x|\geq\eta$.
\end{itemize}

It follows that $\hat{f}(z,\eta)-\xi(z)\eta^{p-1}=0=\hat{f}(z,-\eta)+\xi(z)\eta^{p-1}$ (that is, $\hat{f}(z,x)$ satisfies hypothesis $H(f)_1(iii)$).

We consider problem (\ref{eq1}) with $f$ replaced by $\hat{f}$.

Note that given any $\hat{\eta}>0$ and $r>p$, we can find $c_{14}=c_{14}(\hat{\eta},r)>0$ such that
$$\hat{f}(z,x)x\geq\hat{\eta}|x|^{\tau}-c_{14}|x|^r\ \mbox{for almost all}\ z\in\Omega,\ \mbox{and all}\ x\in\RR.$$

Then we introduce the following Carath\'eodory function
$$\mu(z,x)=\left\{\begin{array}{ll}
	-\hat{\eta}\eta^{\tau-1}+c_{14}\eta^{r-1}&\mbox{if}\ x<-\eta\\
	\hat{\eta}|x|^{\tau-2}x-c_{14}|x|^{r-2}x&\mbox{if}\ -\eta\leq x\leq\eta\\
	\hat{\eta}\eta^{\tau-1}-c_{14}\eta^{r-1}&\mbox{if}\ \eta<x.
\end{array}\right.$$

Using this $\mu(\cdot,\cdot)$, we consider the auxiliary Robin problem (\ref{eq11}). As in Proposition \ref{prop6} we show that the auxiliary problem has a unique positive solution $\tilde{u}\in[0,\eta]\cap D_+$ and due to the oddness of the equation, $\tilde{v}=-\tilde{u}\in[-\eta,0]\cap(-D_+)$ is the unique negative solution of the auxiliary problem.

Recalling that we consider problem (\ref{eq1}) with $f(z,x)$ replaced by $\hat{f}(z,x)$, as before we introduce the following sets:
\begin{center}
$S_+$\ =\ the set of positive solutions of (\ref{eq1}) in $[0,\eta]$,\\
$S_-$\ =\ the set of negative solutions of (\ref{eq1}) in $[-\eta,0]$.
\end{center}

If $S_+\neq\emptyset$ and $S_-\neq\emptyset$, then we have
$$\tilde{u}\leq u\ \mbox{for all}\ u\in S_+\ \mbox{and}\ v\leq\tilde{v}\ \mbox{for all}\ v\in S_-\ (\mbox{see Proposition \ref{prop8}}).$$

This leads to the existence of extremal constant sign solutions
$$\bar{u}_+\in D_+\ \mbox{and}\ \bar{v}_-\in-D_+.$$

Using these extremal constant sign solutions, we consider the Carath\'eodory function $\ell(z,x)$ as in (\ref{eq33}) with $f(z,x)$ replaced by $\hat{f}(z,x)$ (see the proof of Proposition \ref{prop10}) and then introduce the $C^1$-functional $\tilde{\varphi}:W^{1,p}(\Omega)\rightarrow\RR$ defined by
\begin{eqnarray*}
	&&\tilde{\varphi}(u)=\int_{\Omega}G(Du)dz+\frac{1}{p}\int_{\Omega}(\xi(z)+\vartheta)|u|^pdz+\frac{1}{p}\int_{\partial\Omega}\beta(z)|u|^pd\sigma-\int_{\Omega}L(z,u)dz\\
	&&\mbox{for all}\ u\in W^{1,p}(\Omega).
\end{eqnarray*}

Here, as before, $\vartheta>||\xi||_{\infty}$ and $L(z,x)=\int^x_0\ell(z,s)ds$.

We know that
\begin{equation}\label{eq58}
	K_{\tilde{\varphi}}\subseteq[v_-,\bar{u}_+]\cap C^1(\overline{\Omega})
\end{equation}
(see Claim \ref{cl1} in the proof of Proposition \ref{prop10}).
\begin{prop}\label{prop13}
	If hypotheses $H(a),H(\xi),H(\beta),H(f)_3$ hold, $n\in\NN$ and $Y_n\subseteq W^{1,p}(\Omega)$ is an $n$-dimensional subspace, then we can find $\rho_n>0$ such that
	$$\sup\{\tilde{\varphi}(u):u\in Y_n,||u||=\rho_n\}<0.$$
\end{prop}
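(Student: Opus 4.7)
The plan is to exploit the strong concavity of $\hat{f}$ at zero dictated by $H(f)_3(ii)$ -- which forces $\hat f(z,x)$ to grow faster than any multiple of $|x|^{\tau-1}$ near the origin -- together with the equivalence of norms on the finite-dimensional subspace $Y_n$. The strict inequality $\tau<p$ built into $H(f)_3(ii)$ is crucial, since it allows a negative $\|u\|_\tau^\tau$-contribution to dominate a positive $\|u\|^p$-contribution on small spheres.

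First I would establish two pointwise estimates. From the local bound $G(y) \leq c_7|y|^\tau$ for $|y|$ small (see (\ref{eq14})) combined with the global bound of Corollary \ref{c3}, one obtains an inequality of the form $G(y) \leq c(|y|^\tau + |y|^p)$ valid for all $y \in \RR^N$. From $H(f)_3(ii)$, after integration (and using that $\hat f = f$ on $[-c,c]$), one gets $\hat F(z,x) \geq \frac{M}{\tau}|x|^\tau$ for $|x|$ small, where $M>0$ is arbitrary. Since $L(z,x) = \hat F(z,x) + \frac{\vartheta}{p}|x|^p$ whenever $x \in [\bar v_-(z),\bar u_+(z)]$ (an interval containing a fixed neighborhood of $0$, because $\bar u_+\in D_+$ and $\bar v_-\in -D_+$ on the compact $\overline\Omega$), and since on the tails $\ell(z,\cdot)$ grows at most like $|x|^{p-1}$, one absorbs $|x|^\tau$ into a multiple of $|x|^p$ on $\{|x|\geq\delta\}$ (this is where $\tau<p$ is used) to obtain the global lower bound
$$L(z,x) \;\geq\; \frac{M}{\tau}|x|^\tau - C_M|x|^p \qquad \text{for every } (z,x)\in\Omega\times\RR,$$
with $C_M>0$ depending on $M$.

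Substituting into $\tilde\varphi$, using the trace embedding to control $\|u\|_{L^p(\partial\Omega)}$ by $\|u\|$, and invoking the finite-dimensionality of $Y_n$ (so that there exist $A_n, B_n>0$ with $\|Du\|_\tau^\tau \leq A_n\|u\|^\tau$ and $\|u\|_\tau^\tau \geq B_n\|u\|^\tau$ for every $u\in Y_n$), I would arrive at
$$\tilde\varphi(u) \;\leq\; \Big(cA_n - \frac{MB_n}{\tau}\Big)\|u\|^\tau + c_2(M)\|u\|^p \qquad\text{for all } u\in Y_n,$$
with $c$ absolute and $c_2(M)$ absorbing every positive $\|u\|^p$-contribution (including $C_M\|u\|_p^p$). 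I would then pick $M=M(n)$ large enough that the bracketed coefficient is $\leq -1$, which fixes $c_2(M)$; finally, choose $\rho_n\in(0,1)$ so small that $c_2(M)\rho_n^{p-\tau} \leq \tfrac12$. For every $u\in Y_n$ with $\|u\|=\rho_n$ this gives
$$\tilde\varphi(u) \;\leq\; -\tfrac12\,\rho_n^\tau \;<\; 0,$$
which is the desired conclusion.

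The main technical obstacle is the global lower bound on $L(z,x)$ in the first step: one must glue together the sharp estimate near zero coming from $H(f)_3(ii)$ with the cruder information available on the tails of $\ell(z,\cdot)$ to produce the single clean expression $\frac{M}{\tau}|x|^\tau - C_M|x|^p$. Beyond this, the argument is sensitive to the order of the quantifiers: $M$ must be chosen \emph{after} $n$ (so that $A_n, B_n$ are available), and $\rho_n$ must be chosen \emph{after} $M$ (so that $c_2(M)$ is fixed); once these orderings are respected, the remainder is routine.
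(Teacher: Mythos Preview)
Your proposal is correct and follows the same strategy as the paper: exploit the $|x|^\tau$ lower bound on $\hat F$ from $H(f)_3(ii)$, the $|y|^\tau$ upper bound on $G$ from $H(a)(iv)$, equivalence of norms on the finite-dimensional $Y_n$, and the strict inequality $\tau<p$ to render all $p$-power contributions lower order, then choose the free large constant (your $M$, the paper's $\hat\eta$) after $n$ so that the $\|u\|^\tau$-coefficient is negative. The only difference is one of packaging: the paper keeps the estimates local, using finite-dimensionality to force $|u(z)|$ pointwise small and writing directly $\tilde\varphi(u)\le c_{15}\|Du\|_\tau^\tau-\hat\eta\|u\|_\tau^\tau\le[c_{16}-\hat\eta c_{17}]\|u\|^\tau$, whereas you first promote the pointwise bounds on $G$ and $L$ to the global forms $G(y)\le c(|y|^\tau+|y|^p)$ and $L(z,x)\ge\frac{M}{\tau}|x|^\tau-C_M|x|^p$ before substituting --- this is arguably cleaner, since it avoids any tacit assumption of pointwise control on $|Du|$.
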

\begin{proof}
	Hypothesis $H(a)(iv)$ implies that we can find $\rho_1>0$ and $c_{13}>0$ such that
	\begin{eqnarray}\label{eq59}
		&&G_0(t)\leq c_{15}t^{\tau}\ \mbox{for all}\ t\in[0,\rho_1],\nonumber\\
		&\Rightarrow&G(y)\leq c_{15}|y|^{\tau}\ \mbox{for all}\ |y|\leq\rho_1.
	\end{eqnarray}
	
	Also, hypothesis $H(f)_3(ii)$ implies that given $\hat{\eta}>0$ we can find $0<\rho_2\leq\rho_1$ such that
	\begin{eqnarray}\label{eq60}
		&&\hat{F}(z,x)\geq\hat{\eta}|x|^{\tau}\ \mbox{for almost all}\ z\in\Omega,\ \mbox{and all}\ |x|\leq\rho_2\\
		&&(\mbox{here}\ \hat{F}(z,x)=\int^x_0\hat{f}(z,s)ds).\nonumber
	\end{eqnarray}
	
	Let $\rho_3=\min\{\min_{\overline{\Omega}}\bar{u}_+,\min\limits_{\overline{\Omega}}(-\bar{v}_-)\}>0$ (recall that $\bar{u}_+\in D_+,\bar{v}_-\in-D_+$). We can always assume that $0<\rho_2\leq\rho_3$. Since $Y_n$ is finite dimensional, all norms are equivalent and so we can find $\rho_n>0$ such that
	\begin{equation}\label{eq61}
		u\in Y_n,\ ||u||\leq\rho_n\Rightarrow|u(z)|\leq\rho_n\ \mbox{for almost all}\ z\in\Omega.
	\end{equation}
	
	Then from (\ref{eq59}), (\ref{eq60}), (\ref{eq61}) we have
	\begin{eqnarray*}
		&&\tilde{\varphi}(u)\leq c_{15}||Du||^{\tau}_{\tau}-\hat{\eta}||u||^{\tau}_{\tau}\leq[c_{16}-\hat{\eta}c_{17}]||u||^{\tau}\\
		&&\mbox{for some}\ c_{16},c_{17}>0\ \mbox{see (\ref{eq33})}.
	\end{eqnarray*}
	
	Since $\hat{\eta}>0$ is arbitrary, we choose $\hat{\eta}>\frac{c_{16}}{c_{17}}$ and have that
	$$\sup\{\tilde{\varphi}(u):u\in Y_n,||u||=\rho_n\}<0.$$
This completes the proof.
\end{proof}

Now we are ready for the multiplicity result producing a whole sequence of distinct nodal solutions.
\begin{theorem}\label{th14}
	If hypotheses $H(a),H(\xi),H(\beta),H(f)_3$ hold, then problem (\ref{eq1}) admits a sequence $\{u_n\}_{n\geq 1}\subseteq C^1(\overline{\Omega})$ of distinct nodal solutions such that $u_n\rightarrow 0$ in $C^1(\overline{\Omega})$ as $n\rightarrow\infty$.
\end{theorem}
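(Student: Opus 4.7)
The plan is to apply the symmetric critical point theorem of Kajikiya (the small-energy version of Clark's theorem) to the truncated functional $\tilde{\varphi}$ built from the extremal constant sign solutions $\bar{u}_+$ and $\bar{v}_-$ associated with the modified reaction $\hat{f}$. Recall that this result says: if $\varphi\in C^1(X,\RR)$ is even, bounded below with $\varphi(0)=0$, satisfies the C-condition, and for every $n\in\NN$ there is an $n$-dimensional subspace $Y_n$ and $\rho_n>0$ with $\sup\{\varphi(u):u\in Y_n,\ \|u\|=\rho_n\}<0$, then $\varphi$ admits a sequence of critical points $\{u_n\}\subseteq X$ with $\varphi(u_n)\le 0$, $u_n\neq 0$, and $u_n\to 0$ in $X$. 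Proposition~\ref{prop13} gives precisely the last hypothesis; what remains is to verify the others and then extract nodal $C^1$-convergence.

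First I would verify that $\tilde{\varphi}$ is even. Since $\hat{f}(z,\cdot)$ is odd by construction ($\lambda$ is even, $f(z,\cdot)|_{[-\eta,\eta]}$ is odd, and $|x|^{p-2}x$ is odd), and since $a(\cdot)$ is odd so that $u\mapsto -u$ maps solutions of (\ref{eq1}) (with $\hat{f}$) to solutions, we have $S_-=-S_+$ and therefore $\bar{v}_-=-\bar{u}_+$. Consequently the interval $[\bar{v}_-,\bar{u}_+]$ is symmetric about $0$ and the truncation $\ell(z,\cdot)$ from (\ref{eq33}) is odd in $x$, whence $\tilde{\varphi}$ is even and $\tilde{\varphi}(0)=0$. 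Next, because $\ell(z,\cdot)$ is globally bounded (it is constant outside $[\bar{v}_-,\bar{u}_+]$), Corollary~\ref{c3} together with the choice $\vartheta>\|\xi\|_\infty$ and $\beta\ge 0$ shows that $\tilde{\varphi}$ is coercive; in particular it is bounded below, and coercivity combined with the $(S)_+$-property of $A$ (Proposition~\ref{prop4}) yields the C-condition by the standard argument.

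Kajikiya's theorem then furnishes a sequence $\{u_n\}_{n\ge 1}\subseteq K_{\tilde{\varphi}}\setminus\{0\}$ with $u_n\to 0$ in $W^{1,p}(\Omega)$, and we may pass to a subsequence so that the $u_n$ are pairwise distinct. By (\ref{eq58}) each $u_n$ lies in $[\bar{v}_-,\bar{u}_+]\cap C^1(\overline{\Omega})$ and solves problem (\ref{eq1}); since $u_n\to 0$ while $\bar{u}_+\in D_+$ and $\bar{v}_-\in -D_+$, for large $n$ we have $u_n\neq\bar{u}_+$ and $u_n\neq\bar{v}_-$, so by the extremality of $\bar{u}_+$ and $\bar{v}_-$ each such $u_n$ is necessarily nodal.

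Finally I would upgrade the convergence $u_n\to 0$ from $W^{1,p}(\Omega)$ to $C^1(\overline{\Omega})$: since $\{u_n\}\subseteq[\bar{v}_-,\bar{u}_+]$ is uniformly $L^\infty$-bounded, the reaction $\hat{f}(z,u_n)$ is uniformly $L^\infty$-bounded by hypothesis $H(f)_3(i)$, so Lieberman's global regularity theorem \cite{11} gives a uniform bound $\|u_n\|_{C^{1,\eta}(\overline{\Omega})}\le M$ for some $\eta\in(0,1)$. The compact embedding $C^{1,\eta}(\overline{\Omega})\hookrightarrow C^1(\overline{\Omega})$ combined with $u_n\to 0$ in $W^{1,p}(\Omega)$ forces $u_n\to 0$ in $C^1(\overline{\Omega})$. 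The main obstacle is the even-ness of $\tilde{\varphi}$: this hinges on the identification $\bar{v}_-=-\bar{u}_+$, which itself relies on the oddness of $a(\cdot)$ and $\hat{f}(z,\cdot)$. Once this is in place, the remaining steps are routine applications of previously established tools.
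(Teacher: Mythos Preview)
Your approach is essentially the same as the paper's: verify the hypotheses of Kajikiya's symmetric critical point theorem for $\tilde{\varphi}$, extract a sequence $\{u_n\}\subseteq K_{\tilde{\varphi}}\setminus\{0\}$ with $u_n\to 0$ in $W^{1,p}(\Omega)$, argue nodality via extremality of $\bar{u}_+,\bar{v}_-$, and upgrade to $C^1$-convergence via Lieberman's estimates and compact embedding. Your explicit justification that $\bar{v}_-=-\bar{u}_+$ (from the oddness of $a(\cdot)$ and of $\hat{f}(z,\cdot)$), and hence that $\ell(z,\cdot)$ is odd and $\tilde{\varphi}$ even, is a point the paper asserts without comment; it is good that you spell this out.

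There is, however, one genuine slip in the logical order. Elements of $K_{\tilde{\varphi}}$ are solutions of problem~(\ref{eq1}) \emph{with $f$ replaced by $\hat{f}$}, not of the original problem; this was the whole purpose of introducing $\hat{f}$, since under $H(f)_3$ the original $f$ need not satisfy the sign condition $H(f)_1(iii)$ that makes the truncation machinery work. So when you write ``each $u_n$ \dots\ solves problem~(\ref{eq1})'' immediately after invoking~(\ref{eq58}), that is not yet justified. The correct order, which the paper follows, is: first conclude that each $u_n$ is a nodal solution of the $\hat{f}$-problem; then establish $u_n\to 0$ in $C^1(\overline{\Omega})$; and only then use the fact that $\hat{f}(z,\cdot)=f(z,\cdot)$ on $[-c,c]$ to deduce that, for all $n$ large enough, $|u_n(z)|\le c$ on $\overline{\Omega}$ and hence $u_n$ solves the original problem~(\ref{eq1}). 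Without this last step the proof is incomplete, since nothing prevents $\hat{f}(z,u_n)\neq f(z,u_n)$ on the set where $c<|u_n|\le\eta$.
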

\begin{proof}
	We know that $\tilde{\varphi}$ is coercive (see (\ref{eq33})). So, $\tilde{\varphi}$ is bounded below and satisfies the C-condition. Also, $\tilde{\varphi}$ is even. These facts, together with Proposition \ref{prop13}, permit the use of Theorem 1 of Kajikiya \cite{10}. Hence we can find a sequence $\{u_n\}_{n\geq 1}\subseteq W^{1,p}(\Omega)$ such that
	\begin{equation}\label{eq62}
		\{u_n\}_{n\geq 1}\subseteq K_{\tilde{\varphi}}\subseteq[\bar{v}_-,\bar{u}_+]\cap C^1(\overline{\Omega})\ \mbox{and}\ u_n\rightarrow 0\ \mbox{in}\ W^{1,p}(\Omega)\ (\mbox{see (\ref{eq58})}).
	\end{equation}
	
	From (\ref{eq62}) and (\ref{eq33}) we see that $\{u_n\}_{n\geq 1}$ are nodal solutions of (\ref{eq1}). Moreover, the nonlinear regularity theory (see Lieberman \cite{11}) and the compact embedding of $C^{1,\alpha}(\overline{\Omega})$ $(0<\alpha<1)$ into $C^1(\overline{\Omega})$ imply that $u_n\rightarrow 0$ in $C^1(\overline{\Omega})$ as $n\rightarrow\infty$. Since $\hat{f}(z,\cdot)$ and $f(z,\cdot)$ coincide near zero, we have thus produced a sequence of distinct nodal solutions for problem (\ref{eq1}).
\end{proof}

\begin{remark}
	Recently, Papageorgiou and R\u{a}dulescu \cite{18} have proved an analogous result for problems with no potential term (that is, $\xi\equiv 0$) and with a reaction term with zeros. Theorem \ref{th14} generalizes the result of Papageorgiou and R\u{a}dulescu \cite{18}. It also extends Theorem 2.10 of Wang \cite{28} where the equation is driven by the $p$-Laplacian with no potential term (that is, $\xi\equiv 0$). Wang produced a sequence of nontrivial solutions $\{u_n\}_{n\geq 1}$, not necessarily nodal, such that $||u_n||_{\infty}\rightarrow 0$.
\end{remark}

\medskip
{\bf Acknowledgments.} This research was supported by the Slovenian Research Agency grants
P1-0292, J1-8131, J1-7025, N1-0064, and N1-0083. V.D.~R\u adulescu acknowledges the support through the Project
MTM2017-85449-P of the DGISPI (Spain). We thank the referee for useful comments and suggestions.

\end{document}